\numberwithin{equation}{section}
\def\3bar{{|\hspace{-.02in}|\hspace{-.02in}|}}
\def\E{{\mathcal{E}}}
\def\T{{\mathcal{T}}}
\def\dQ{{\mathbb{Q}}}
\def\b0{\boldsymbol{0}}
\def\sumT{\sum_{T\in\mathcal{T}_h}}     
\def\bn{{\mathbf{n}}}
\def\bf{{\mathbf{f}}}
\newtheorem{algorithm1}{Weak Galerkin Algorithm}
 \newcommand{\norm}[1]{\left\Vert#1\right\Vert}
 \newcommand{\trb}[1]{|\!|\!|#1|\!|\!|}
 \newcommand{\pa}[2]{\frac{\partial #1}{\partial #2}}
 \newcommand{\D}[1]{\Delta #1}
 \newcommand{\DW}[1]{\Delta _w #1}
 \newcommand{\G}[1]{\nabla #1}
 \newcommand{\p}[1]{\partial #1}
 \newcommand{\normT}[1]{\norm{#1}_T}
 \newcommand{\normPT}[1]{\norm{#1}_{\partial T}}
\begin{document}

\title{The stabilizer-free weak Galerkin finite element method for the Biharmonic equation using polynomials of reduced order}

\author{
  Shanshan Gu
  \and
  Qilong Zhai
}
\maketitle
\begin{abstract}
In this article, we decrease the degree of the polynomials on the boundary of the weak functions and modify the definition of the weak laplacian which are introduced in \cite{BiharmonicSFWG} to use the SFWG method for the biharmonic equation. Then we propose the relevant numerical format and obtain the optimal order of error estimates in $H^2$ and $L^2$ norms. Finally, we confirm the estimates using numerical experiments.
\end{abstract}

\begin{keywords}
stabilizer free weak Galerkin finite element method, the biharmonic equation, weak operator.
\end{keywords}


\section{Introduction}

In this article, we consider the biharmonic equation of the form
    \begin{align}
    \label{Bmodel-equ1}\Delta^2u&=f, \qquad in~\Omega,\\
    \label{Bmodel-equ2}u&=0,\qquad on ~\partial \Omega,\\
    \label{Bmodel-equ3}\pa{u}{\bn}&=0,\qquad on ~\partial \Omega,
    \end{align}
    where $\bn$ is the outward unit normal vector along $\partial \Omega$ and $\Omega$ is the bounded polygonal or polyhedral domain in $\mathbb{R}^d(d=2,3)$.

    We can give the variational form with ease: find $u\in H^2_0(\Omega)$ such that
    \begin{align}
        \label{var-equ1}(\Delta u,\Delta v)=(f,v), \qquad \forall ~v\in H^2_0(\Omega),
    \end{align}
    where we use the definition of the space
    \begin{align*}
    H^2_0(\Omega)=\{v\in H^2(\Omega):~v|_{\partial\Omega}=0,~\pa{v}{\bn}|_{\partial\Omega}=0\}.
    \end{align*}

    The conforming finite element methods, as traditional techniques, have been utilized to solve the biharmonic equation \cite{MR1191139,BiharmonicCFEM2,MR2817542} based on the above form. They construct $C^1$-continuous finite elements to form the finite dimensional subspaces of $H^2(\Omega)$. However, the complexity of constructing such continuous elements has led attention to other approaches.

    The researchers use variational forms different from the form (\ref{var-equ1}) to avoid the construction of $C^1$-continuous finite elements. For example, the mixed finite element methods introduce auxiliary variables to build the variational formulations. The auxiliary variables introduced usually have some physical significance. Different auxiliary variables are suitable for solving different types of physics problems, such as, introducing $\varphi = -\D u$ in \cite{CRMFEM,BiharmonicMixFEM3} for solving hydrodynamics problems or introducing $\sigma = -\nabla ^2 u$ in \cite{MR0386298,HJMFEM} to solve plate problems.

    In order to avoid constructing the $H^2$ conforming finite elements, there are discontinuous Galerkin finite element methods based on variational form (\ref{var-equ1}), such as IPDG method \cite{BiharmonicDGFEM}, which adopt discontinuous functions sets as finite element spaces to approximate $H^2(\Omega)$. Although it is more flexible by selecting discontinuous functions, the numerical scheme is introduced complicated penalty term.

    In the last decade, a new discontinuous Galerkin finite element method, the weak Galerkin (WG) finite element method, has been well developed. The method use weak functions and weak differential operators in numerical formulation to solve various equations, such as the Possion equation \cite{mu_weak_2012,PossionWG}, the Stokes equation \cite{StokesWG}, the Brinkman equation \cite{BrinkmanWG}, the biharmonic equation \cite{BiharmonicWG,BiharmonicWGMFEM} and so on. Weak functions denote the polynomials inside the cell and at the boundary as $v_0$ and $v_b$, respectively. Weak differential operators are definited by using the partial integrals of differential operators. The subsequent improvement of this method is also carried out through these two parts. 
    
    Scholars reduce the order of $v_b$ to decrease the degrees of freedom to solve the Possion equation in \cite{PossionWGReOrder}, the biharmonic equation in \cite{BiharmonicWGReOrder} and so on. The modified weak Galerkin (MWG) finite element method utilizes the weak functions $\{v_0,\{v_0\}\}$ instead of $\{v_0,v_b\}$ to solve equations \cite{PossionMWG,BiharmonicMWG}. Using the MWG finite element method, the complexity of the solution process is reduced by decreasing the degrees of freedom. In addition, the stabilizers appear in WG numerical formats to ensure weak continuity. The stabilizer free weak Galerkin (SFWG) finite element method can eliminate the stabilizers by increasing the order of polynomials in the range of weak operators, which simplies the numerical scheme. The SFWG methods with weak functions $(P_k(T),P_{k}(e))$ in \cite{PossionSFWG} or $(P_k(T),P_{k-1}(e))$ in \cite{PossionSFWGReOrder} are used to solve the second-order elliptic equation. In \cite{BiharmonicSFWG}, authors use the SFWG method to solve the biharmonic equation by the weak functions $(P_k(T),P_k(e),P_{k-1}(e))$. In this paper, we intend to apply the SFWG method by the weak functions $(P_k(T),P_{k-1}(e),P_{k-1}(e))$ to the biharmonic equation.

    The outline of this article is as follows. In Section 2, we make preparations and propose the numerical scheme. In the next section, we derive the error equations and yield the error estimates in $H^2$ and $L^2$ norms. Then, we utilize two examples to obtain the correctness of the theoretical results in Section 4. In the final section, we summarize the work done in this paper and make plans for the future.


\section{SFWG numerical scheme for the biharmonic equation}
\subsection{Notations for partitions}

Suppose $K$ is an open bounded domain in $\mathbb{R} ^d$ and $s$ is a positive integer. We utilize $\|\cdot\|_{s,K}$,
$|\cdot|_{s,K}$, $(\cdot,\cdot)_{s,K}$ to represent the norm, seminorm and
inner product of Sobolev space $H^s(K)$, respectively. If $K=\Omega$, we drop the subscript $K$ and drop $s$ if $s=2$.

We let the partition $\T_h$ of $\Omega$ satisfy assumptions in \cite{PossionMixedWG} and denote $\E_h$ as the set of all edges in $\mathbb{R}^2$ or flat faces in $\mathbb R ^3$. $\E^0_h=\E_h\backslash\partial\Omega$ is defined as the set of all interior edges or flat faces. We denote the mesh size of $\T_{h}$ by $h$.

In addition, we define the set of normal directions on $\E_h$ as follows
\begin{eqnarray*}
  \mathcal{D} _h=\{\bn _e:\bn_e\text{ is unit and normal to }e,~e\in\E_h\}.
\end{eqnarray*}

To define the SFWG method, we
introduce some discrete spaces as follows.
\begin{align}
  V_h&=\{v=\{v_0,v_b,v_n\bn _e\},v_0|_T\in P_k(T),v_b|_e\in P_{k-1}(e),v_n|_e\in P_{k-1}(e),T\in \mathcal{T}_h,e\in \mathcal{E}_h\},\label{spa-Vh} \\
  V_h^0&=\{v=\{v_0,v_b,v_n\bn_e\}\in V_h:~v_b|_{e}=0,~v_n|_e=0,~
  e\subset\partial\Omega\},\label{spa-Vh0}
\end{align}
where $\rho\in P_{k}(T)$ denotes the polynomial with degree no more than $k$ on the inner part of the element $T$ and $P_k(e)$ represents the set of polynomials with degree no more than $k$ on edge $e$.

We denote by $Q_0$ the $L^2$ projection operator to
$P_{k}(T)$ on each element $T\in\T_h$ and $Q_b$ denotes a locally defined $L^2$ projection operator to $P_{k-1}(e)$ on each edge $e\in\E_h$.

Then we can define the discrete weak laplacian of functions:
\begin{definition}
  For each $v\in V_h+H^2(\Omega)$, $\Delta_w v|_T \in P_{j}(T)$ satisfying
  \begin{eqnarray}\label{def-wlaplace}
    (\Delta_w v,\varphi)_{T}=(\Delta v_0,\varphi)_{T}+\langle Q_b(v_0-v_b),\nabla\varphi\cdot\bn\rangle_{\partial T}-
    \langle (\nabla v_0-v_n\bn_e)\cdot\bn,\varphi\rangle_{\partial T},\quad
    \forall \varphi\in P_{j}(T),
  \end{eqnarray}
  where $j>k$ and $\bn$ denotes the outward unit normal vector.
\end{definition}


And we use the following simple notations:
\begin{align*}
(v,w)_{\T_h}&=\sumT (v,w)_T=\sumT \int _T vwdT,\\
\langle v,w\rangle_{\partial \T_h}&=\sumT\langle v,w\rangle_{\partial T}=\sumT\int_{\partial T}vwds.
\end{align*}
    
\subsection{Numerical Scheme}
With above preparations, we can define the numerical scheme as follows.
\begin{algorithm1}
The numerical solution of $(\ref{Bmodel-equ1})-(\ref{Bmodel-equ3})$
$u_h\in V^0_h$ such that
\begin{align}
  \label{scheme}(\Delta _w u_h,\Delta _w v)_{\T_h}=(f,v _0)_{\T _h},\qquad\forall v\in V^0_h.
\end{align}
\end{algorithm1}

\begin{lemma}\label{operator-ex}
  For any $v\in H^2(\Omega)$, we have 
  \begin{align}
    \DW v =\dQ_h\D v, \qquad\forall T\in \T_h,
  \end{align}
  where we define $\dQ_{h}$ as the projection to $[P_{j}(T)]^d$
  in $T\in\T_h$.
\end{lemma}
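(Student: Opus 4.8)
The plan is to unwind the definition (\ref{def-wlaplace}) of the discrete weak Laplacian when its argument is a genuine $H^2(\Omega)$ function, and to show that the two boundary contributions drop out. First I would regard $v\in H^2(\Omega)$ as the weak function whose components on each $T\in\T_h$ and $e\in\E_h$ are $v_0=v|_T$, $v_b=v|_e$ (or, equivalently for what follows, $v_b=Q_b(v|_e)$) and $v_n=\nabla v\cdot\bn_e$; this identification is legitimate because the traces of $v$ and of $\nabla v$ along edges and faces are well defined for $v\in H^2(\Omega)$, so $v$ lives in $V_h+H^2(\Omega)$ in the sense used by (\ref{def-wlaplace}).

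With this in hand I would fix an arbitrary $\varphi\in P_j(T)$ and inspect the three terms on the right-hand side of (\ref{def-wlaplace}). The second term vanishes: since $v_0|_{\partial T}=v|_{\partial T}=v_b$ on each $e\subset\partial T$ we get $Q_b(v_0-v_b)=0$ (and if instead $v_b=Q_b(v|_e)$, the same conclusion follows since $Q_b$ is a projection onto $P_{k-1}(e)$ and $v_b\in P_{k-1}(e)$, so $Q_b(v_0-v_b)=Q_bv_0-v_b=v_b-v_b=0$). The third term vanishes as well: on $\partial T$ the globally fixed face normal $\bn_e$ coincides with $\pm\bn$, hence $v_n\,\bn_e\cdot\bn=(\nabla v\cdot\bn_e)(\bn_e\cdot\bn)=\nabla v\cdot\bn=\nabla v_0\cdot\bn$, so that $(\nabla v_0-v_n\bn_e)\cdot\bn=0$ on $\partial T$. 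Therefore (\ref{def-wlaplace}) reduces to $(\DW v,\varphi)_T=(\D v_0,\varphi)_T=(\D v,\varphi)_T$ for every $\varphi\in P_j(T)$.

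To conclude I would invoke the defining property of the $L^2$ projection $\dQ_h$, namely $(\dQ_h\D v,\varphi)_T=(\D v,\varphi)_T$ for all $\varphi\in P_j(T)$. Subtracting the two identities gives $(\DW v-\dQ_h\D v,\varphi)_T=0$ for all $\varphi\in P_j(T)$, and since $\DW v-\dQ_h\D v$ itself lies in $P_j(T)$, taking $\varphi=\DW v-\dQ_h\D v$ forces $\DW v=\dQ_h\D v$ on $T$; ranging over $T\in\T_h$ yields the claim. I do not anticipate a genuine obstacle here, since this is a consistency identity rather than an estimate; the only points that demand care are the orientation bookkeeping between the fixed edge normal $\bn_e\in\mathcal D_h$ and the outward normal $\bn$ of $T$ in the last boundary term, and checking that the chosen weak representative of $v$ is precisely the one that annihilates the $Q_b(v_0-v_b)$ contribution.
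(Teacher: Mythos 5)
Your proposal is correct and follows essentially the same route as the paper: identify the canonical weak representative of $v\in H^2(\Omega)$, observe that the two boundary terms in (\ref{def-wlaplace}) vanish (the first since $Q_b(v-v)=0$, the second since $(\nabla v\cdot\bn_e)(\bn_e\cdot\bn)=\nabla v\cdot\bn$), and conclude from the defining property of the $L^2$ projection $\dQ_h$. Your treatment of the orientation of $\bn_e$ versus $\bn$ is in fact spelled out more carefully than in the paper's own one-line computation.
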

\begin{proof}
  For any $T\in\T_h$, using the definitions of $\DW$ and $\dQ_h$, we have
  \begin{align*}
    (\DW v,\varphi)_T=&(\D v,\varphi)_T+\langle Q_b(v-v),\G\varphi\cdot\bn \rangle_{\p T}-\langle(\G v-(\G v\cdot\bn_e)\bn_e)\cdot\bn,\varphi\rangle _{\p T}\\
    =&(\D v,\varphi)_T\\
    =&(\dQ_h\D v,\varphi)_T
  \end{align*}
  for any $\varphi\in P_j(T)$, which implies $\DW v=\dQ_h\D v$.
\end{proof}

\subsection{Existence and Uniqueness}
First, we introduce the following semi-norms.
\begin{definition}\label{norm}
For $v\in V_h+H^2(\Omega)$,
\begin{align*}
\trb v ^2=&~(\DW v,\DW v)_{\T_h},\\
\|v\|^2_{2,h}=&~\sumT \Big(\norm{\D v_0}^2_T+h_T^{-3}\norm{Q_b(v_0-v_b)}^2_{\partial T}+h_T^{-1}\norm{(\G v_0-v_n\bn_e)\cdot\bn}^2_{\partial T}\Big).
\end{align*}
\end{definition}

\begin{lemma}\label{lemma-norm-equ}
There exist two positive constants $C_1$ and $C_2$ such that
\begin{align*}
C_1\norm{v}_{2,h}\leq\trb v\leq C_2\norm{v}_{2,h},\quad \forall v\in V_h.
\end{align*}
\end{lemma}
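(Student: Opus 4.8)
The plan is to establish the norm equivalence $C_1\norm{v}_{2,h}\le\trb v\le C_2\norm{v}_{2,h}$ by proving the two bounds separately, exploiting the definition of $\DW v$ as a Riesz representative against the test space $P_j(T)$ together with scaling/trace/inverse inequalities on each element $T\in\T_h$.

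First I would prove the upper bound $\trb v\le C_2\norm{v}_{2,h}$. Fix $T\in\T_h$ and take $\varphi=\DW v|_T\in P_j(T)$ in the defining identity \eqref{def-wlaplace}. This gives
\begin{align*}
\norm{\DW v}_T^2=(\D v_0,\DW v)_T+\langle Q_b(v_0-v_b),\G(\DW v)\cdot\bn\rangle_{\p T}-\langle(\G v_0-v_n\bn_e)\cdot\bn,\DW v\rangle_{\p T}.
\end{align*}
I would bound the three terms by Cauchy--Schwarz on $T$ and on $\p T$. For the boundary terms, the trace inequality on $\DW v$ gives $\norm{\DW v}_{\p T}\le C h_T^{-1/2}\norm{\DW v}_T$, and the inverse inequality gives $\norm{\G(\DW v)\cdot\bn}_{\p T}\le C h_T^{-3/2}\norm{\DW v}_T$ (one power $h_T^{-1}$ from the gradient, one $h_T^{-1/2}$ from the trace). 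Matching these against the weights $h_T^{-3}$ and $h_T^{-1}$ built into $\norm{\cdot}_{2,h}$, every term produces a factor $\norm{\DW v}_T$ times a piece of $\norm{v}_{2,h}$ restricted to $T$; dividing by $\norm{\DW v}_T$, summing over $T$, and using Cauchy--Schwarz on the sum yields $\trb v\le C_2\norm{v}_{2,h}$.

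The lower bound $C_1\norm{v}_{2,h}\le\trb v$ is the main obstacle, since it needs a reverse estimate and this is exactly where the stabilizer-free construction (with the enlarged range $P_j(T)$, $j>k$) does the work. The strategy is: on a fixed reference-type element, show that the linear functional $\varphi\mapsto$ RHS of \eqref{def-wlaplace} controls each of the three quantities $\norm{\D v_0}_T$, $h_T^{-3/2}\norm{Q_b(v_0-v_b)}_{\p T}$, $h_T^{-1/2}\norm{(\G v_0-v_n\bn_e)\cdot\bn}_{\p T}$. Concretely, one argues that the three test-function ``directions'' — $\varphi$ matching $\D v_0$ in $T$, $\varphi$ whose normal derivative trace matches $Q_b(v_0-v_b)$ on $\p T$, and $\varphi$ whose trace matches $(\G v_0-v_n\bn_e)\cdot\bn$ on $\p T$ — can be realized inside $P_j(T)$ because $j$ is taken large enough; this is the place where one invokes the key lemma from \cite{BiharmonicSFWG} (or reproves its analogue) that for $j$ sufficiently large relative to $k$ the map from $P_j(T)$ to the relevant trace/bulk data is surjective with a bounded right inverse after scaling. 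Choosing such $\varphi$ and inserting it into \eqref{def-wlaplace}, then using scaling to track the $h_T$ powers, bounds each term of $\norm{v}_{2,h}|_T$ by $C\norm{\DW v}_T$ times lower-order cross terms that are absorbed; summing over $T$ gives the lower bound.

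Finally I would note that the equivalence is stated for $v\in V_h$ (rather than $V_h+H^2(\Omega)$), which is what makes the finite-dimensional/scaling arguments legitimate: all inverse inequalities used above are valid on the polynomial components $v_0\in P_k(T)$, $v_b\in P_{k-1}(e)$, $v_n\in P_{k-1}(e)$. The only genuinely delicate point, as stressed above, is the surjectivity-with-bounded-inverse statement underlying the lower bound; everything else is a bookkeeping exercise in scaling, trace, and inverse inequalities element by element.
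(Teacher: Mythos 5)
Your proposal is correct and follows essentially the same route as the paper, which simply defers to Lemma 3.3 of the cited SFWG reference: the upper bound by testing \eqref{def-wlaplace} with $\varphi=\DW v$ plus trace/inverse inequalities, and the lower bound via the polynomial-lifting (bounded right inverse) lemma that requires $j$ sufficiently large relative to $k$. You have correctly identified the lifting lemma as the only nontrivial ingredient, so nothing further is needed.
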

The proof of this Lemma is similar with the process of the Lemma 3.3 in \cite{BiharmonicSFWG}.
\begin{lemma}
  $\norm{\cdot}_{2,h}$ is the norm of $V^0_h$.
\end{lemma}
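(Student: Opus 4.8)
The plan is to verify the three defining properties of a norm on the finite-dimensional space $V^0_h$. Nonnegativity is immediate, since by Definition~\ref{norm} the quantity $\norm{v}_{2,h}^2$ is a finite sum of squares of $L^2$-type expressions. Absolute homogeneity $\norm{\alpha v}_{2,h}=|\alpha|\norm{v}_{2,h}$ and the triangle inequality follow termwise from the corresponding properties of $\norm{\cdot}_T$, $\norm{\cdot}_{\partial T}$ and the Minkowski inequality, using the linearity of $\Delta$, $\nabla$, $Q_b$ and of the maps $v\mapsto v_0,v_b,v_n$; these I would dispatch in one line. The only real content is definiteness: I must show that $\norm{v}_{2,h}=0$ forces $v=\{v_0,v_b,v_n\bn_e\}=0$.

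So assume $v\in V^0_h$ with $\norm{v}_{2,h}=0$. Reading off the three summands, this gives, for every $T\in\T_h$ and every edge $e\subset\partial T$: $\D v_0=0$ in $T$, $Q_b(v_0-v_b)=0$ on $e$, and $(\G v_0-v_n\bn_e)\cdot\bn=0$ on $e$. (One could first pass through Lemma~\ref{lemma-norm-equ} to get $\trb v=0$, i.e. $\DW v=0$, but these three conditions already suffice.) The central step will be an elementwise integration by parts: since $\D v_0=0$, Green's identity gives $\norm{\G v_0}_T^2=\langle\G v_0\cdot\bn,v_0\rangle_{\partial T}$ on each $T$, hence $\sumT\norm{\G v_0}_T^2=\sumT\langle\G v_0\cdot\bn,v_0\rangle_{\partial T}$. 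I then claim the right-hand side vanishes. On an interior edge $e$ shared by $T_1,T_2$ with outward normals $\bn_1=-\bn_2$, the third condition gives $\G v_0^{(i)}\cdot\bn_i=v_n(\bn_e\cdot\bn_i)\in P_{k-1}(e)$, so the two contributions of $e$ collapse, via the defining property of the $L^2$-projection $Q_b$, to $\langle v_n(\bn_e\cdot\bn_1),Q_b(v_0^{(1)}-v_0^{(2)})\rangle_e$, which is zero because $Q_b v_0^{(i)}=v_b$ on $e$ by the second condition (here one uses that $v_b\in P_{k-1}(e)$ so $Q_b v_b=v_b$). On a boundary edge the same reasoning reduces the contribution to $\langle v_n(\bn_e\cdot\bn),v_b\rangle_e$, which vanishes since $v_b=0$ on $\partial\Omega$ for $v\in V^0_h$. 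Therefore $\G v_0=0$ on every $T$, i.e. $v_0$ is constant on each element.

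It then remains to close the argument. With $\G v_0=0$, the third condition becomes $v_n(\bn_e\cdot\bn)=0$, and since $\bn=\pm\bn_e$ this forces $v_n=0$ on $\E_h$. The second condition gives $v_b=Q_b v_0=v_0|_e$ on each edge, a constant. Matching these constants across any interior edge shows that the elementwise constant values of $v_0$ agree across all interior edges, so by connectedness of $\Omega$, $v_0$ is a single global constant; the boundary condition $v_b=0$ then forces that constant to be $0$, so $v_0\equiv0$ and hence $v_b=Q_b v_0=0$ on all of $\E_h$. Thus $v=0$, which establishes definiteness and completes the proof. I expect the main obstacle to be exactly the cancellation of the boundary terms in the integration-by-parts identity: it requires simultaneously tracking the orientation of the element normals, checking the degree count that places the normal traces $\G v_0\cdot\bn$ in $P_{k-1}(e)$ so that $Q_b$ may be inserted at no cost, and invoking the homogeneous conditions defining $V^0_h$ on boundary edges; everything else is routine.
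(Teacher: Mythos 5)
Your proof is correct and takes essentially the same route as the paper's: the same elementwise Green's identity $\norm{\G v_0}_T^2=\langle\G v_0\cdot\bn,v_0\rangle_{\partial T}$, the same cancellation of interior-edge terms by inserting $Q_b$ (legitimate since $\G v_0\cdot\bn_e\in P_{k-1}(e)$) and using $Q_bv_0=v_b$ together with $\G v_0\cdot\bn_e=v_n$, and the same use of the homogeneous boundary data in $V^0_h$ to kill the boundary-edge terms. The only differences are cosmetic (you handle boundary edges via $v_b=0$ where the paper uses $v_n=0$, and you spell out the final constant-propagation step that the paper leaves implicit).
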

\begin{proof}
  According to the definition of $\norm{\cdot}_{2,h}$, we shall only prove the positivity property. Assume that $v\in V^0_h$ such that $\norm{v}_{2,h}=0$. 
  Then we have
  \begin{align*}
    \D v_0|_{T}=0,~   Q_b(v_0-v_b)|_{\p T}=0,~ (\G v_0-v_n\bn_e)\cdot\bn|_{\p T}=0,\qquad\forall T\in\T_h.
  \end{align*}
  And $(\G v_0-v_n\bn_e)\cdot\bn|_{\p T}=0$ implies ($\G v_0\cdot\bn_e-v_n)|_{\p T}=0$ for any $T\in\T_h$.
  Next we verify that $\nabla v_0=0,~\forall T\in \T_h$. From the Gauss formula, we have
  \begin{align*}
    \norm{\nabla v_0}^2_T=(\nabla v_0,\nabla v_0)_T=-(\Delta v_0,v_0)_T +\langle \nabla v_0\cdot\bn,v_0 \rangle _{\partial T}=\langle \nabla v_0\cdot\bn,v_0 \rangle _{\partial T}.
  \end{align*}

  To sum over all $T$, we have
  \begin{align*}
    \sumT \norm{\nabla v_0}^2_T=\sumT\langle \nabla v_0\cdot\bn,v_0 \rangle _{\partial T}
  \end{align*}

  When $e\in\E_h^0$, assume $T_1$ and $T_2$ be two elements sharing $e$, $v_0^1,v_0^2$ be the values of $v$ on $T_1$ and $T_2$, and $\bn_1,\bn_2$ be the unit outward normal vectors of $T_1,T_2$ on $e$.
  \begin{align*}
    \langle \nabla v_0^1\cdot\bn_1,v_0^1 \rangle _e+\langle \nabla v_0^2\cdot\bn_2,v_0^2 \rangle _e&=\pm \Big(\langle \G v_0^1\cdot\bn_e,v_0^1\rangle _e-\langle \G v_0^2\cdot\bn_e,v_0^2 \rangle _e\Big)\\
    &=\pm \Big(\langle \G v_0^1\cdot\bn_e,Q_bv_0^1\rangle _e-\langle \G v_0^2\cdot\bn_e,Q_bv_0^2 \rangle _e\Big)\\
    &=\pm \Big(\langle v_n,v_b \rangle _e-\langle v_n,v_b \rangle _e\Big)\\
    &=0,
  \end{align*}
  where we use $Q_bv_0|_{\p T}=v_b|_{\p T}$ and $\G v_0\cdot\bn_e|_{\p T}=v_n|_{\p T}$.

  With $\nabla v_0\cdot\bn_e=v_n=0$ on any boundary edge, we get
  \begin{align}
    \sumT \norm{\nabla v_0}^2_T=0,
  \end{align}
  which deduces $\nabla v_0=0$ on any element $T$. Using $Q_bv_0|_e=v_b|_e$, $\forall e\in \E_h$ and $v\in V_h^0$, we obtain $v=0$ on $\Omega$.
\end{proof}

Furthermore, by the norm equivalence of the Lemma $\ref{lemma-norm-equ}$, we have $\trb\cdot$ is the norm in $V_h^0$.

\begin{theorem}
  The numerical scheme (\ref{scheme}) exists an unique solution.
\end{theorem}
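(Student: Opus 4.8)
The plan is to reduce the claim to a standard linear-algebra fact: the scheme (\ref{scheme}) is a square linear system on the finite-dimensional space $V_h^0$, so existence and uniqueness are equivalent to uniqueness alone, i.e. to the implication that the only solution of the homogeneous problem is zero. So first I would note that $V_h^0$ is finite dimensional and that (\ref{scheme}) defines, for fixed $f$, a linear system for the coefficients of $u_h$; hence it suffices to show that $u_h\in V_h^0$ with $(\DW u_h,\DW v)_{\T_h}=0$ for all $v\in V_h^0$ forces $u_h=0$.

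Second, I would take $v=u_h$ in the homogeneous equation to get $(\DW u_h,\DW u_h)_{\T_h}=\trb{u_h}^2=0$, hence $\trb{u_h}=0$. Third, I would invoke the fact established just above the theorem statement, namely that $\trb\cdot$ is a genuine norm on $V_h^0$ (which follows from Lemma \ref{lemma-norm-equ} together with the preceding lemma showing $\|\cdot\|_{2,h}$ is a norm on $V_h^0$). Therefore $\trb{u_h}=0$ implies $u_h=0$, which gives uniqueness and, by the dimension-counting argument, existence.

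The only genuinely substantive content here has already been discharged in the preceding lemmas (the positivity of $\|\cdot\|_{2,h}$, via the Gauss-formula argument showing $\nabla v_0=0$ on every element and then $v=0$, and the norm equivalence Lemma \ref{lemma-norm-equ}); given those, the theorem is essentially immediate. Thus I do not expect any real obstacle — the main point to be careful about is simply to phrase the "square system $\Rightarrow$ (injective $\Leftrightarrow$ surjective)" step correctly, e.g. by choosing a basis of $V_h^0$ and observing that the stiffness matrix $A$ with entries $(\DW\phi_i,\DW\phi_j)_{\T_h}$ is symmetric and, by the norm property, positive definite, so it is invertible and the system $A\mathbf{c}=\mathbf{b}$ has a unique solution. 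If one wants to avoid matrices altogether, one can instead argue: the solution set is either empty or an affine subspace parallel to $\ker$ of the associated bilinear form, and since that kernel is trivial and the right-hand-side functional $v\mapsto (f,v_0)_{\T_h}$ is bounded on the (finite-dimensional, hence complete) space $V_h^0$ equipped with $\trb\cdot$, the Lax–Milgram lemma (or just Riesz representation, since $(\DW\cdot,\DW\cdot)_{\T_h}$ is an inner product on $V_h^0$) yields a unique $u_h$.
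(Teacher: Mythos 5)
Your proposal is correct and follows essentially the same route as the paper: set $f=0$, test with $v=u_h$ to get $\trb{u_h}=0$, and invoke the fact (established just before the theorem via Lemma \ref{lemma-norm-equ} and the positivity of $\|\cdot\|_{2,h}$) that $\trb{\cdot}$ is a norm on $V_h^0$. The only difference is that you spell out the standard finite-dimensional ``uniqueness implies existence'' step, which the paper leaves implicit.
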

\begin{proof}
  Let's say $f=0$ and $v=u_h$ in (\ref{scheme}), then we have $\trb{u_h}^2=0$. Since $\trb{\cdot}$ is the norm of $V_h^0$, we get $u_h=0$.
\end{proof}

\section{Error analysis}
\subsection{Error equation}
\begin{theorem}
  Let $e_h=u-u_h$ and $\varepsilon _h=Q_hu-u_h$, then we have
  \begin{align}
    \label{err-equ1}(\DW e_h,\DW v)_{\T _h}=&-l_1(u,v)+l_2(u,v)-l_3(u,v),\quad\forall v\in V_h^0,\\
    \label{err-equ2}(\DW \varepsilon _h,\DW v)_{\T _h}=&-l_1(u,v)+l_2(u,v)-l_3(u,v)+(\DW (Q_hu-u),\DW v),\quad\forall v\in V_h^0,
  \end{align}
  where 
  \begin{align*}
    l_1(u,v)=&~\langle \G(\D u-\dQ_h\D u)\cdot\bn,Q_b(v_0-v_b)\rangle_{\p\T _h},\\
    l_2(u,v)=&~\langle\D u-\dQ_h\D u,(\G v_0-v_n\bn_e)\cdot\bn\rangle _{\p\T _h},\\
    l_3(u,v)=&~\langle\G (\D u)\cdot\bn,v_0-Q_bv_0\rangle _{\p\T _h}.
  \end{align*}
\end{theorem}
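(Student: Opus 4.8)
The plan is to derive the error equation (\ref{err-equ1}) by testing the exact solution against the weak laplacian formulation and comparing with the numerical scheme (\ref{scheme}); equation (\ref{err-equ2}) will then follow immediately by adding and subtracting $\DW Q_hu$. First I would start from $(\DW Q_hu,\DW v)_{\T_h}$ for an arbitrary $v\in V_h^0$. Using Lemma~\ref{operator-ex}, which gives $\DW u=\dQ_h\D u$, and then writing $\DW Q_hu = \DW(Q_hu-u)+\dQ_h\D u$, the key is to expand $(\dQ_h\D u,\DW v)_{\T_h}$ using the defining identity (\ref{def-wlaplace}) for $\DW v$ with the test polynomial $\varphi=\dQ_h\D u\in[P_j(T)]^d$ on each $T$. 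This yields
\begin{align*}
(\dQ_h\D u,\DW v)_{\T_h}=(\dQ_h\D u,\D v_0)_{\T_h}+\langle Q_b(v_0-v_b),\G(\dQ_h\D u)\cdot\bn\rangle_{\p\T_h}-\langle(\G v_0-v_n\bn_e)\cdot\bn,\dQ_h\D u\rangle_{\p\T_h}.
\end{align*}

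Next I would handle the volume term $(\dQ_h\D u,\D v_0)_{\T_h}$: since $\D v_0\in P_{k-2}(T)\subset P_j(T)$, the projection can be removed, giving $(\D u,\D v_0)_{\T_h}$, and then I integrate by parts twice on each $T$ to move the two Laplacians onto $u$, producing $(\D^2 u,v_0)_{\T_h}=(f,v_0)_{\T_h}$ in the interior plus boundary terms $\langle\G(\D u)\cdot\bn,v_0\rangle_{\p\T_h}-\langle\D u,\G v_0\cdot\bn\rangle_{\p\T_h}$. The crucial bookkeeping step is that the boundary terms coming from $v_b$ and $v_n$ vanish when summed over $\T_h$: because $u\in H^2(\Omega)$ the quantities $\G(\D u)\cdot\bn$ and $\D u$ are single-valued across interior edges, while $v_b=v_n=0$ on $\partial\Omega$ for $v\in V_h^0$, so one may freely insert $-\langle\G(\D u)\cdot\bn,v_b\rangle$ and $+\langle\D u,v_n\rangle$ type terms (the $\G v_0\cdot\bn$ against $\D u$ recombining with $v_n\bn_e\cdot\bn$). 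Regrouping everything against the combinations $Q_b(v_0-v_b)$, $(\G v_0-v_n\bn_e)\cdot\bn$, and $v_0-Q_bv_0$ is exactly what produces $-l_1(u,v)+l_2(u,v)-l_3(u,v)$, using also that $\langle Q_b(v_0-v_b),\G(\dQ_h\D u)\cdot\bn\rangle=\langle Q_b(v_0-v_b),\G\D u\cdot\bn\rangle-l_1(u,v)$ and that $Q_b$ is self-adjoint so $\langle\G(\D u)\cdot\bn,Q_b(v_0-v_b)\rangle=\langle Q_b\G(\D u)\cdot\bn,v_0-v_b\rangle$, allowing the $v_b$-part to be dropped and leaving the $l_3$ term with $v_0-Q_bv_0$.

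The main obstacle I anticipate is the careful tracking of edge contributions and signs: one must repeatedly use that projections $\dQ_h$ and $Q_b$ are orthogonal (so they can be inserted or removed against lower-degree polynomials and moved across inner products), and that continuity of the exact solution's traces lets one insert the single-valued quantities against $v_b,v_n$ to convert raw traces of $v_0$ into the jump-like combinations $Q_b(v_0-v_b)$ and $(\G v_0-v_n\bn_e)\cdot\bn$. Once (\ref{err-equ1}) is established, equation (\ref{err-equ2}) is obtained by writing $\DW e_h=\DW\varepsilon_h+\DW(Q_hu-u)$ in the left-hand side of (\ref{err-equ1}), moving the term $(\DW(Q_hu-u),\DW v)_{\T_h}$ to the right, which is a one-line step requiring no further estimates.
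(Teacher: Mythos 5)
Your proposal is correct and follows essentially the same route as the paper: expand $(\dQ_h\D u,\DW v)_{\T_h}$ via the defining identity for $\DW v$ with $\varphi=\dQ_h\D u$, drop the projection against $\D v_0\in P_{k-2}(T)\subset P_j(T)$, integrate by parts twice, insert the single-valued traces against $v_b$ and $v_n$, split $v_0-v_b=(v_0-Q_bv_0)+Q_b(v_0-v_b)$ to produce $l_1,l_2,l_3$, and subtract the scheme; the paper merely runs the same computation starting from $(f,v_0)_{\T_h}$ rather than from the bilinear form, and obtains (\ref{err-equ2}) from (\ref{err-equ1}) by the same one-line shift of $(\DW(Q_hu-u),\DW v)_{\T_h}$.
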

\begin{proof}
  For any $v\in V_h^0$, from (\ref{Bmodel-equ1}), we have
  \begin{align*}
    &(f,v_0)_{\T _h}\\
    =&(\D ^2u,v_0)_{\T _h}\\
    =&-(\G (\D u),\G v_0)_{\T _h}+\langle\G (\D u)\cdot\bn,v_0\rangle _{\p\T _h}\\
    =&(\D u,\D v_0)_{\T _h}-\langle \D u,\G v_0\cdot\bn\rangle _{\p \T _h}+\langle\G (\D u)\cdot\bn,v_0-v_b\rangle _{\p\T _h}\\
    =&(\D u,\D v_0)_{\T _h}-\langle \D u,(\G v_0-v_n\bn_e)\cdot\bn\rangle _{\p \T _h}+\langle\G (\D u)\cdot\bn,v_0-Q_bv_0\rangle _{\p\T _h}+\langle\G (\D u)\cdot\bn,Q_bv_0-v_b\rangle _{\p\T _h},
  \end{align*}
  where we use $v_b|_{\p\Omega}=0$ and $v_n|_{\p\Omega}=0$. This implies
  \begin{align*}
    &(\D u,\D v_0)_{\T _h}\\
    =&(f,v_0)_{\T _h}+\langle \D u,(\G v_0-v_n\bn_e)\cdot\bn\rangle _{\p \T _h}-\langle\G (\D u)\cdot\bn,v_0-Q_bv_0\rangle _{\p\T _h}-\langle\G (\D u)\cdot\bn,Q_bv_0-v_b\rangle _{\p\T _h}.
  \end{align*}

  Utilizing the above equation, the Lemma \ref{operator-ex} and the definition of $\DW$, for any $v\in V_h^0$, we obtain
  \begin{align*}
    &(\DW u,\DW v)_{\T _h}\\
    =&(\dQ_h \D u,\DW v)_{\T_h}\\
    =&(\D v_0,\dQ_h\D u)_{\T _h}+\langle Q_b(v_0-v_b),\G(\dQ_h\D u)\cdot\bn\rangle _{\p\T_h}-\langle (\G v_0-v_n\bn_e)\cdot\bn,\dQ_h\D u\rangle _{\p\T_h}\\
    =&(\D v_0,\D u)_{\T _h}+\langle Q_b(v_0-v_b),\G(\dQ_h\D u)\cdot\bn\rangle _{\p\T_h}-\langle (\G v_0-v_n\bn_e)\cdot\bn,\dQ_h\D u\rangle _{\p\T_h}\\
    =&(f,v_0)_{\T _h}-l_1(u,v)+l_2(u,v)-l_3(u,v).
  \end{align*}
  Combining with the numerical scheme (\ref{scheme}), we have
  \begin{align*}
    (\DW (u-u_h),\DW v)_{\T _h}=-l_1(u,v)+l_2(u,v)-l_3(u,v), \quad\forall v\in V_h^0.
  \end{align*}
  Add $(\DW (Q_hu-u),\DW v)_{\T _h}$ to both sides of the above equation, we get (\ref{err-equ2}).
\end{proof}


\begin{lemma}
  \cite{PossionMixedWG}For $w\in H^{k+2}(\Omega)$, we have
  \begin{align}
    \label{Q0-est}\sumT h_T^{2s}\norm{w-Q_0w}_{s,T}^2\leq &Ch^{2(k+1)}\norm{w}_{k+1}^2,\\
    \label{Qh-est}\sumT h_T^{2s}\norm{\D w-\dQ_h\D w}_{s,T}^2\leq &Ch^{2k}\norm{w}^2_{k+2}.
  \end{align}
\end{lemma}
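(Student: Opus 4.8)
The plan is to deduce both bounds from a single local approximation estimate applied element by element, followed by a summation over $\T_h$. The key ingredient is the classical approximation property of $L^2$ projections onto polynomial spaces: if $\Pi$ denotes the $L^2$ projection on an element $T\in\T_h$ onto a space of polynomials that contains $P_{r-1}(T)$, then for every $\phi\in H^r(T)$ and every integer $s$ with $0\le s\le r$,
\begin{align*}
  \norm{\phi-\Pi\phi}_{s,T}\le C\,h_T^{\,r-s}\,|\phi|_{r,T},
\end{align*}
with $C$ independent of $T$ and of $\phi$. I would prove this in the standard way: pass to a reference configuration by a scaling argument (legitimate thanks to the shape-regularity hypotheses on $\T_h$ from \cite{PossionMixedWG}); use that $\Pi$ reproduces $P_{r-1}$ to write $\phi-\Pi\phi=(I-\Pi)(\phi-p)$ for an arbitrary $p\in P_{r-1}(T)$; bound $I-\Pi$ in the $H^s$ norm on the reference element by equivalence of norms on the finite-dimensional polynomial space together with an inverse inequality; take the infimum over $p$; invoke the Bramble--Hilbert lemma on the reference element; and scale back, tracking the powers of $h_T$ introduced by each derivative under the change of variables.

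Granting this, \eqref{Q0-est} follows by taking $\Pi=Q_0$, the $L^2$ projection onto $P_k(T)$ which reproduces $P_k(T)$, together with $r=k+1$ and $\phi=w$; since $w\in H^{k+2}(\Omega)$ we have $w|_T\in H^{k+1}(T)$, so the local estimate gives $\norm{w-Q_0w}_{s,T}\le C h_T^{k+1-s}|w|_{k+1,T}$. Multiplying by $h_T^{2s}$ yields $h_T^{2s}\norm{w-Q_0w}_{s,T}^2\le C h_T^{2(k+1)}\norm{w}_{k+1,T}^2\le C h^{2(k+1)}\norm{w}_{k+1,T}^2$ because $h_T\le h$; summing over $T\in\T_h$ and using $\sumT\norm{w}_{k+1,T}^2=\norm{w}_{k+1}^2$ gives the first inequality.

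For \eqref{Qh-est}, observe that $\dQ_h$ acts componentwise as the $L^2$ projection onto $P_j(T)$ with $j>k$, hence it reproduces $P_{k-1}(T)\subseteq P_j(T)$; since $w\in H^{k+2}(\Omega)$ implies $\D w\in H^k(\Omega)$, the local estimate with $r=k$ applied componentwise gives $\norm{\D w-\dQ_h\D w}_{s,T}\le C h_T^{k-s}|\D w|_{k,T}\le C h_T^{k-s}\norm{w}_{k+2,T}$. Multiplying by $h_T^{2s}$, using $h_T\le h$, and summing over $T\in\T_h$ produces the second inequality. The only genuinely delicate point is the first paragraph, namely that the constant in the local estimate is uniform over all elements of $\T_h$; this is exactly where the shape-regularity assumptions on the partition from \cite{PossionMixedWG} are used, and everything after that is elementary bookkeeping of powers of $h_T$ and a sum over the mesh.
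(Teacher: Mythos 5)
Your proposal is correct. The paper offers no proof of this lemma---it is imported verbatim from \cite{PossionMixedWG}---and your argument (local $L^2$-projection approximation via scaling to a reference element, polynomial reproduction, norm equivalence on the finite-dimensional range of $\Pi$, the Bramble--Hilbert lemma, and then the elementary bookkeeping $h_T\le h$ followed by summation over $\T_h$) is precisely the standard derivation behind the cited result; your identification of the admissible ranges $r=k+1$ for $Q_0$ and $r=k$ for $\dQ_h$ acting on $\D w$, and of shape regularity as the source of the uniform constant, is the right reading of where the hypotheses enter.
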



\begin{lemma}
  Assume $w\in H^{k+2}(\Omega)$, and we have the following estimates
  \begin{align}
    \label{Qh-est-h1}\Big(\sumT h_T\normPT{\D w-\dQ_h\D w}^2\Big)^{\frac{1}{2}}\leq &Ch^{k}\norm{w}_{k+2},\\
    \label{Qh-est-h3}\Big(\sumT h_T^3\normPT{\G(\D w-\dQ_h\D w)}^2\Big)^{\frac{1}{2}}\leq &Ch^{k}\norm{w}_{k+2}.
  \end{align}
\end{lemma}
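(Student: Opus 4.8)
The plan is to establish the two trace estimates \eqref{Qh-est-h1} and \eqref{Qh-est-h3} by combining the standard trace inequality on each element with the interior approximation bounds \eqref{Qh-est} already available from the previous lemma. Recall the (scaled) trace inequality valid on a shape-regular element $T$: for any $\phi\in H^1(T)$,
\begin{align*}
\normPT{\phi}^2\leq C\big(h_T^{-1}\normT{\phi}^2+h_T\normT{\nabla\phi}^2\big).
\end{align*}
First I would apply this with $\phi=\D w-\dQ_h\D w$. Multiplying by $h_T$ and summing over $T\in\T_h$ gives
\begin{align*}
\sumT h_T\normPT{\D w-\dQ_h\D w}^2\leq C\sumT\big(\normT{\D w-\dQ_h\D w}^2+h_T^2\normT{\nabla(\D w-\dQ_h\D w)}^2\big),
\end{align*}
and the right-hand side is exactly the $s=0$ and $s=1$ cases of \eqref{Qh-est}, each bounded by $Ch^{2k}\norm{w}_{k+2}^2$. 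Taking square roots yields \eqref{Qh-est-h1}.

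For \eqref{Qh-est-h3} I would apply the same trace inequality to each component of the vector field $\phi=\G(\D w-\dQ_h\D w)$, so that $\normPT{\phi}^2\leq C(h_T^{-1}\normT{\phi}^2+h_T\normT{\nabla\phi}^2)$ with $\normT{\phi}^2=|\D w-\dQ_h\D w|_{1,T}^2$ and $\normT{\nabla\phi}^2=|\D w-\dQ_h\D w|_{2,T}^2$. Multiplying by $h_T^3$ and summing,
\begin{align*}
\sumT h_T^3\normPT{\G(\D w-\dQ_h\D w)}^2\leq C\sumT\big(h_T^2|\D w-\dQ_h\D w|_{1,T}^2+h_T^4|\D w-\dQ_h\D w|_{2,T}^2\big),
\end{align*}
which are the $s=1$ and $s=2$ instances of \eqref{Qh-est}, again each of order $h^{2k}\norm{w}_{k+2}^2$. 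Taking square roots gives \eqref{Qh-est-h3}. A small point to note is that \eqref{Qh-est} as stated bounds $\norm{\D w-\dQ_h\D w}_{s,T}$, the full $H^s$ norm, which dominates the seminorm, so the seminorm versions used above follow immediately.

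The only genuine obstacle is bookkeeping: making sure the regularity $w\in H^{k+2}(\Omega)$ is enough so that $\D w\in H^{k}(\Omega)$ and hence $\G(\D w)\in H^{k-1}(\Omega)$, which is what \eqref{Qh-est} with $s=2$ implicitly requires (it needs $\D w-\dQ_h\D w\in H^2(T)$ elementwise, i.e. $\D w\in H^2(T)$, consistent with $k\geq 0$ and interior elliptic regularity being irrelevant here since these are local projection estimates). One should also confirm that the trace inequality constant depends only on the shape-regularity of $\T_h$, which is guaranteed by the partition assumptions of \cite{PossionMixedWG}. Apart from these routine verifications, the proof is a direct two-line reduction to the already-proven interior estimate.
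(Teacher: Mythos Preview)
Your proposal is correct and follows essentially the same approach as the paper: apply the scaled trace inequality on each element to $\D w-\dQ_h\D w$ (respectively $\G(\D w-\dQ_h\D w)$), multiply by the appropriate power of $h_T$, sum, and invoke the interior projection estimate \eqref{Qh-est} at the relevant values of $s$. The paper's proof is more terse but the argument is identical.
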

\begin{proof}
  By the trace inequality and the (\ref{Q0-est})-(\ref{Qh-est}), we can get
  \begin{align*}
    \Big(\sumT h_T\normPT{\D w-\dQ_h\D w}^2\Big)^{\frac{1}{2}}\leq &\Big(\sumT \normT{\D w-\dQ_h\D w}^2+h_T^2\normT{\G (\D w-\dQ_h\D w)}^2\Big)^{\frac{1}{2}}\\
    \leq &Ch^{k}\norm{w}_{k+2}.
  \end{align*}
  \begin{align*}
    \Big(\sumT h_T^3\normPT{\G(\D w-\dQ_h\D w)}^2\Big)^{\frac{1}{2}}\leq &\Big(\sumT h_T^2\normT{\G(\D w-\dQ_h\D w)}^2+h_T^4\normT{\G^2(\D w-\dQ_h\D w)}^2\Big)^{\frac{1}{2}}\\
    \leq &Ch^{k}\norm{w}_{k+2}.
  \end{align*}
  
\end{proof}

From the proof of the Lemma A.8 in \cite{BiharmonicWGReOrder}, we have the following estimates:
\begin{lemma}\cite{BiharmonicWGReOrder}
  There are positive constants $C$ and $\lambda$ such that
  \begin{align}
    \sumT \normT{\G v_0}^2\leq C\norm{v}_{2,h}^2,\quad \forall v\in V_h^0,
  \end{align}
  and
  \begin{align}
    \sumT \normT{\G v_0}^2\leq \lambda h^{-2}\norm{v}^2 + \frac{C}{\lambda}h^2\norm{v}_{2,h}^2,\quad\forall v\in V_h^0.
  \end{align}
\end{lemma}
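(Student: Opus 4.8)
The plan is to derive both inequalities from the element‑wise integration‑by‑parts identity
\begin{equation}\label{prop-green-id}
\sumT\normT{\G v_0}^2=-\sumT(\D v_0,v_0)_T+\sumT\langle\G v_0\cdot\bn,v_0\rangle_{\p T},
\end{equation}
the same identity already used in the proof that $\norm{\cdot}_{2,h}$ is a norm on $V_h^0$, and then to rewrite and estimate the right‑hand side so that only $\D v_0$, $Q_b(v_0-v_b)$, $(\G v_0-v_n\bn_e)\cdot\bn$ (the three ingredients of $\norm{\cdot}_{2,h}$) and the factor $\norm{v_0}$ remain.

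The principal step is the treatment of the boundary sum in \eqref{prop-green-id}, and this is where the reduced order of $v_b,v_n$ is exploited: on each edge $e$ the traces $\G v_0\cdot\bn$ and $(\G v_0-v_n\bn_e)\cdot\bn$ lie in $P_{k-1}(e)$ (because $v_0\in P_k(T)$, $v_n\in P_{k-1}(e)$ and $\bn_e$ is constant), which is exactly the range of $Q_b$, so $Q_b$ may be inserted against them at no cost. Using $\langle\G v_0\cdot\bn,v_0\rangle_{\p T}=\langle\G v_0\cdot\bn,Q_bv_0\rangle_{\p T}$, then $Q_bv_0=Q_b(v_0-v_b)+v_b$, and — as in the norm proof — the single‑valuedness of $v_b,v_n$ on $\E_h^0$ together with their vanishing on $\p\Omega$, the part carried by $v_b$ against the interelement jump of $\G v_0\cdot\bn$ reorganizes into $\sumT\langle(\G v_0-v_n\bn_e)\cdot\bn,v_b\rangle_{\p T}$. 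Bounding what is left by the Cauchy--Schwarz inequality, the trace inequality $\normPT{v_0}\le C(h_T^{-1/2}\normT{v_0}+h_T^{1/2}\normT{\G v_0})$, the inverse‑trace inequality $\normPT{\G v_0}\le Ch_T^{-1/2}\normT{\G v_0}$, and the definition of $\norm{\cdot}_{2,h}$, one obtains, for every $\eps>0$,
\begin{equation}\label{prop-bdry-bd}
\Big|\sumT\langle\G v_0\cdot\bn,v_0\rangle_{\p T}\Big|\le\eps\sumT\normT{\G v_0}^2+C\eps^{-1}h^2\norm{v}_{2,h}^2+C\norm{v}_{2,h}\norm{v_0}.
\end{equation}

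Since $|\sumT(\D v_0,v_0)_T|\le(\sumT\normT{\D v_0}^2)^{1/2}\norm{v_0}\le\norm{v}_{2,h}\norm{v_0}$, inserting \eqref{prop-bdry-bd} into \eqref{prop-green-id}, fixing $\eps$ small, absorbing the $\G v_0$–term, and using that $h$ is bounded gives
\begin{equation}\label{prop-master-bd}
\sumT\normT{\G v_0}^2\le C\norm{v}_{2,h}^2+C\norm{v}_{2,h}\norm{v_0}.
\end{equation}
The first estimate then follows from a discrete Poincar\'e--Friedrichs inequality on $V_h^0$, $\norm{v_0}^2\le C\big(\sumT\normT{\G v_0}^2+\sumT h_T^{-1}\normPT{Q_b(v_0-v_b)}^2\big)$ — a Brenner‑type inequality, valid because $Q_b(v_0-v_b)$ controls the $L^2$‑projected jumps of $v_0$ over $\E_h^0$ and the trace of $v_0$ on $\p\Omega$; since $\sumT h_T^{-1}\normPT{Q_b(v_0-v_b)}^2=\sumT h_T^2\cdot h_T^{-3}\normPT{Q_b(v_0-v_b)}^2\le h^2\norm{v}_{2,h}^2$, substituting this into \eqref{prop-master-bd} and absorbing $\sumT\normT{\G v_0}^2$ once more yields $\sumT\normT{\G v_0}^2\le C\norm{v}_{2,h}^2$. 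For the second estimate one keeps $\norm{v_0}=\norm{v}$ in \eqref{prop-master-bd} and applies Young's inequality — also directly to $(\D v_0,v_0)_T$ — with weight $\lambda h^{-2}$, $C\norm{v}_{2,h}\norm{v}\le\tfrac12\lambda h^{-2}\norm{v}^2+C\lambda^{-1}h^2\norm{v}_{2,h}^2$, and collects the $h^2$‑scaled remainders to reach $\sumT\normT{\G v_0}^2\le\lambda h^{-2}\norm{v}^2+C\lambda^{-1}h^2\norm{v}_{2,h}^2$ after relabelling the constants (it suffices to take $\lambda$ in a bounded range).

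The step I expect to be most delicate is the boundary bookkeeping behind \eqref{prop-bdry-bd}: one must check that every boundary contribution which is not already one of the three $\norm{\cdot}_{2,h}$‑ingredients either cancels between neighbouring elements — thanks to the single‑valuedness of $v_b,v_n$ on $\E_h^0$ and the homogeneous data of $V_h^0$ — or is absorbable after Young's inequality, and the degree matching $\G v_0\cdot\bn|_e,(\G v_0-v_n\bn_e)\cdot\bn|_e\in P_{k-1}(e)$ is exactly what legitimizes the $Q_b$‑manipulations in this reduced‑order setting. The discrete Poincar\'e--Friedrichs inequality on $V_h^0$ is the other essential input for the first estimate and could alternatively be cited from the literature.
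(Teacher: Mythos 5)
The paper does not actually prove this lemma; it imports it verbatim from Lemma A.8 of \cite{BiharmonicWGReOrder}, so your argument cannot be matched line by line against anything in the text. Judged on its own, your proof is correct and follows the route one would expect. The two decisive observations are sound: (i) since $v_0\in P_k(T)$ and $e$ is flat with $\bn$ constant, $\G v_0\cdot\bn|_e\in P_{k-1}(e)$ lies in the range of $Q_b$, so $Q_b$ can be inserted for free and $Q_bv_0$ split as $Q_b(v_0-v_b)+v_b$; (ii) the leftover $\sumT\langle v_n\bn_e\cdot\bn,v_b\rangle_{\p T}$ cancels over interior edges because $\bn_e\cdot\bn_1+\bn_e\cdot\bn_2=0$ and $v_b$ vanishes on $\p\Omega$, leaving $\sumT\langle(\G v_0-v_n\bn_e)\cdot\bn,v_b\rangle_{\p T}$, which is handled by writing $v_b=Q_bv_0-Q_b(v_0-v_b)$ and applying the trace inequality to $\normPT{v_0}$. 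Every surviving term is then one of the three $\norm{\cdot}_{2,h}$ ingredients, an absorbable $\eps\sumT\normT{\G v_0}^2$, or $C\norm{v}_{2,h}\norm{v_0}$, exactly as your bound claims. Two remarks. First, the discrete Poincar\'e--Friedrichs inequality you invoke for the first estimate is a genuine external input (Brenner-type, for piecewise $H^1$ functions); your reduction to it is right, since $P_0\subset P_{k-1}$ for $k\ge2$ so $Q_b(v_0-v_b)$ controls the $P_0$-projected jumps and boundary traces, but in a written version this should be cited or proved rather than asserted. Second, your Young step for the second estimate yields $\big(C+C'/\lambda\big)h^2\norm{v}_{2,h}^2$ rather than $(C/\lambda)h^2\norm{v}_{2,h}^2$; restricting to $\lambda\le1$ repairs this, and that is consistent with how the paper later uses the bound (it takes $\lambda$ small in the duality argument), so the restriction is harmless.
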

Further, using the projection inequality and the Lemma \ref{lemma-norm-equ}, we have
\begin{align*}
  \Big(\sumT \normPT{v_0-Q_bv_0}^2\Big)^{\frac{1}{2}}\leq Ch^{\frac{1}{2}}\Big(\sumT \normT{\G v_0}^2\Big)^{\frac{1}{2}},
\end{align*}
which implies
\begin{align}
  \label{v0-Qbv01}\Big(\sumT \normPT{v_0-Q_bv_0}^2\Big)^{\frac{1}{2}}\leq &Ch^{\frac{1}{2}}\trb{v},\quad\forall v\in V_h^0,\\
  \label{v0-Qbv02}\Big(\sumT \normPT{v_0-Q_bv_0}^2\Big)^{\frac{1}{2}}\leq &\lambda ^{\frac{1}{2}}Ch^{-\frac{1}{2}}\norm{v}+\lambda ^{-\frac{1}{2}}Ch^{\frac{3}{2}}\trb{v},\quad\forall v\in V_h^0.
\end{align}

\begin{lemma}
  For any $v\in V_h^0$, if $w\in H^{k+2}(\Omega)$, we have
  \begin{align}
    \label{l1-est1}|l_1(w,v)|\leq &Ch^k\norm{w}_{k+2}\trb{v},\\
    \label{l2-est1}|l_2(w,v)|\leq &Ch^k\norm{w}_{k+2}\trb{v},\\
    \label{l3-est1}|l_3(w,v)|\leq &Ch^{k-1}\norm{w}_{k+2}\trb{v}\\
    |l_3(w,v)|\leq &\lambda ^{\frac{1}{2}}Ch^{k-2}\norm{w}_{k+2}\norm{v}+\lambda ^{-\frac{1}{2}}Ch^{k}\norm{w}_{k+2}\trb{v}.
  \end{align}
  Specially, when $w\in H^4(\Omega)$, we get
  \begin{align}
    \label{l1-est2}|l_1(w,v)|\leq &Ch^2\norm{w}_{4}\trb{v},\\
    \label{l2-est2}|l_2(w,v)|\leq &Ch^2\norm{w}_{4}\trb{v},\\
    \label{l3-est2}|l_3(w,v)|\leq &Ch\norm{w}_{4}\trb{v},\\
    \label{l3-est3}|l_3(w,v)|\leq &\lambda ^{\frac{1}{2}}C\norm{w}_4\norm{v}+\lambda ^{-\frac{1}{2}}Ch^2\norm{w}_{4}\trb{v}
  \end{align}
\end{lemma}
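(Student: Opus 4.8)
The plan is to estimate each of the three terms by the same mechanism: write the bilinear form as a sum over $\partial T$ of a product of two factors, apply Cauchy--Schwarz edge-by-edge, and then recognize one factor as an approximation error controlled by the interpolation lemmas \eqref{Qh-est-h1}--\eqref{Qh-est-h3} and the other as a piece of either $\|v\|_{2,h}$ or the quantities bounded in \eqref{v0-Qbv01}--\eqref{v0-Qbv02}; finally invoke the norm equivalence of Lemma~\ref{lemma-norm-equ} to replace $\|v\|_{2,h}$ by $\trb{v}$. Concretely, for $l_1(w,v)=\langle \G(\D w-\dQ_h\D w)\cdot\bn, Q_b(v_0-v_b)\rangle_{\p\T_h}$ I would insert the scaling factors $h_T^{3/2}$ and $h_T^{-3/2}$ and apply Cauchy--Schwarz to get
\begin{align*}
|l_1(w,v)|\leq \Big(\sumT h_T^3\normPT{\G(\D w-\dQ_h\D w)}^2\Big)^{\frac12}\Big(\sumT h_T^{-3}\normPT{Q_b(v_0-v_b)}^2\Big)^{\frac12}.
\end{align*}
The first factor is $\leq Ch^k\norm{w}_{k+2}$ by \eqref{Qh-est-h3}, and the second is exactly (part of) $\norm{v}_{2,h}\leq C\trb{v}$ by Lemma~\ref{lemma-norm-equ}; this yields \eqref{l1-est1}. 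The term $l_2(w,v)=\langle \D w-\dQ_h\D w,(\G v_0-v_n\bn_e)\cdot\bn\rangle_{\p\T_h}$ is handled identically, now splitting the scaling as $h_T^{1/2}\cdot h_T^{-1/2}$: the first factor is $\leq Ch^k\norm{w}_{k+2}$ by \eqref{Qh-est-h1}, the second is controlled by the $h_T^{-1}\normPT{(\G v_0-v_n\bn_e)\cdot\bn}^2$ piece of $\norm{v}_{2,h}^2$, giving \eqref{l2-est1}.

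For $l_3(w,v)=\langle \G(\D w)\cdot\bn, v_0-Q_bv_0\rangle_{\p\T_h}$ the structure is different because there is no projection error on $w$ available; instead I would use the trace/inverse inequality to bound $\big(\sumT h_T^3\normPT{\G(\D w)}^2\big)^{1/2}\leq Ch^{-1}\cdot Ch^k\norm{w}_{k+2}$ — here the loss of one power of $h$ relative to $l_1, l_2$ is the source of the $h^{k-1}$ rate — and pair it with $\big(\sumT h_T^{-3}\normPT{v_0-Q_bv_0}^2\big)^{1/2}$. Applying \eqref{v0-Qbv01}, $\big(\sumT \normPT{v_0-Q_bv_0}^2\big)^{1/2}\leq Ch^{1/2}\trb{v}$, and absorbing the $h_T^{-3/2}$ against the $h_T^{1/2}$ (which costs $h^{-1}$ more), one arrives at $|l_3(w,v)|\leq Ch^{k-1}\norm{w}_{k+2}\trb{v}$, i.e.\ \eqref{l3-est1}. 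For the sharper, $\norm{v}$-dependent bound I would instead feed in \eqref{v0-Qbv02}, which splits $\normPT{v_0-Q_bv_0}$ into an $h^{-1/2}\norm{v}$ piece and an $h^{3/2}\trb{v}$ piece with the tunable parameter $\lambda$; tracking the powers of $h$ through the same pairing produces the $\lambda^{1/2}Ch^{k-2}\norm{w}_{k+2}\norm{v}+\lambda^{-1/2}Ch^k\norm{w}_{k+2}\trb{v}$ bound.

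Finally, the specialized estimates \eqref{l1-est2}--\eqref{l3-est3} are just the case $k=2$ of the general ones, using $H^4(\Omega)=H^{k+2}(\Omega)$ when $k=2$; nothing new is needed beyond substituting $k=2$ and $\norm{w}_{k+2}=\norm{w}_4$. The main obstacle I anticipate is purely bookkeeping: getting the powers of $h_T$ in the three scaled Cauchy--Schwarz splittings to match exactly the scalings baked into $\norm{v}_{2,h}$ and into \eqref{v0-Qbv01}--\eqref{v0-Qbv02}, and in the $l_3$ estimate correctly accounting for the extra $h^{-1}$ from the inverse inequality applied to $\G(\D w)$ (since $\D w$ is not a polynomial, one should really insert $\dQ_h\D w$, bound $\G\dQ_h\D w$ by an inverse estimate, and control the remainder $\G(\D w-\dQ_h\D w)$ by \eqref{Qh-est-h3} — a standard but easy-to-slip step). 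Once the scalings are aligned, each bound is immediate from Cauchy--Schwarz plus the cited lemmas.
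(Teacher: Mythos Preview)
Your treatment of $l_1$ and $l_2$ is correct and matches the paper exactly: Cauchy--Schwarz with the scalings $h_T^{3/2}\cdot h_T^{-3/2}$ and $h_T^{1/2}\cdot h_T^{-1/2}$, then \eqref{Qh-est-h3}, \eqref{Qh-est-h1} and the norm equivalence. The specialization to $k=2$ for \eqref{l1-est2}--\eqref{l3-est3} is also correct.

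The gap is in $l_3$. Your claimed bound $\big(\sumT h_T^3\normPT{\G(\D w)}^2\big)^{1/2}\leq Ch^{k-1}\norm{w}_{k+2}$ is false for $k>2$: the trace inequality only gives $h_T^3\normPT{\G(\D w)}^2\leq C(h_T^2|w|_{3,T}^2+h_T^4|w|_{4,T}^2)$, which yields at best $O(h)$ regardless of $k$. Your proposed fix---insert $\dQ_h\D w$ and use an inverse estimate on $\G\dQ_h\D w$---does not help, because $\dQ_h\D w\in P_j(T)$ with $j>k$, so $\G(\dQ_h\D w)\cdot\bn|_e$ has degree $j-1\geq k$ and is \emph{not} orthogonal to $v_0-Q_bv_0$; the term $\langle\G(\dQ_h\D w)\cdot\bn,v_0-Q_bv_0\rangle_{\p\T_h}$ survives and, via the inverse estimate, contributes only $C\norm{w}_2$ with no positive power of $h$.

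The missing idea, which the paper uses, is to subtract the \emph{lower-degree} projection $Q_0\D w\in P_k(T)$. Then $\G(Q_0\D w)\cdot\bn|_e\in P_{k-1}(e)$, so by the definition of $Q_b$ one has the exact orthogonality
\[
\langle\G(Q_0\D w)\cdot\bn,\,v_0-Q_bv_0\rangle_{\p\T_h}=0,
\]
and $l_3(w,v)=\langle\G(\D w-Q_0\D w)\cdot\bn,\,v_0-Q_bv_0\rangle_{\p\T_h}$. Now the first factor carries the full approximation order: $\big(\sumT h_T^3\normPT{\G(\D w-Q_0\D w)}^2\big)^{1/2}\leq Ch^{k}\norm{w}_{k+2}$ by trace plus projection estimates, and pairing with \eqref{v0-Qbv01} (respectively \eqref{v0-Qbv02}) gives \eqref{l3-est1} (respectively the $\lambda$-split bound). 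The choice of projection degree is the crux: it must be low enough to trigger the $Q_b$-orthogonality, and $\dQ_h$ is too rich for that.
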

\begin{proof}
  From the Cauchy-Schwarz inequality and (\ref{Qh-est-h1})-(\ref{Qh-est-h3}), we can derive
  \begin{align*}
    |l_1(w,v)|=&|\sumT \langle \G (\D w-\dQ_h\D w)\cdot\bn,Q_b(v_0-v_b)\rangle _{\partial T}|\\
    \leq &\Big(\sumT h_T^3\normPT{\G (\D w-\dQ_h\D w)}^2\Big)^{\frac{1}{2}}\Big(\sumT h_T^{-3}\normPT{Q_bv_0-v_b}^2\Big)^{\frac{1}{2}}\\
    \leq &Ch^k\norm{w}_{k+2}\norm{v}_{2,h}\\
    \leq &Ch^k\norm{w}_{k+2}\trb{v},
  \end{align*}
  \begin{align*}
    |l_2(w,v)|=&|\sumT \langle \D w-\dQ_h\D w,(\G v_0-v_n\bn_e)\cdot\bn\rangle _{\partial T}|\\
    \leq &\Big(\sumT h_T\normPT{\D w-\dQ_h\D w}^2\Big)^{\frac{1}{2}}\Big(\sumT h_T^{-1}\normPT{(\G v_0-v_n\bn_e)\cdot\bn}\Big)^{\frac{1}{2}}\\
    \leq &Ch^k\norm{w}_{k+2}\norm{v}_{2,h}\\
    \leq &Ch^k\norm{w}_{k+2}\trb{v}.
  \end{align*}

  Since $\langle \G (Q_0\D w)\cdot\bn,v_0-Q_bv_0\rangle _{\partial \T _h}=0$, the Cauchy-Schwarz inequality, the trace inequality, the projection inequality and (\ref{v0-Qbv01}), we obtain
  \begin{align*}
    |l_3(w,v)|=&|\langle \G (\D w)\cdot\bn,v_0-Q_bv_0\rangle _{\p\T_h}|\\
    = &|\langle \G (\D w-Q_0\D w)\cdot\bn,v_0-Q_bv_0\rangle _{\p\T_h}|\\
    \leq &Ch^{-\frac{3}{2}}\Big(\sumT h_T^3\normPT{\G (\D w-Q_0\D w)}^2\Big)^{\frac{1}{2}}\Big(\sumT \normPT{v_0-Q_bv_0}^2\Big)^{\frac{1}{2}}\\
    \leq &Ch^{-\frac{3}{2}}\Big(\sumT (h_T^2\normT{\G (\D w-Q_0\D w)}^2+h_T^4\normT{\G ^2(\D w-Q_0\D w)}^2)\Big)^{\frac{1}{2}} Ch^{\frac{1}{2}}\trb{v}\\
    \leq &Ch^{-\frac{3}{2}}Ch^{k}\norm{w}_{k+2}Ch^{\frac{1}{2}}\trb{v}\\
    \leq &Ch^{k-1}\norm{w}_{k+2}\trb{v}.
  \end{align*}

  If we use (\ref{v0-Qbv02}) instead of (\ref{v0-Qbv01}), we have
  \begin{align*}
    |l_3(w,v)|\leq &Ch^{-\frac{3}{2}}Ch^{k}\norm{w}_{k+2}(\lambda ^{\frac{1}{2}}Ch^{-\frac{1}{2}}\norm{v}+\lambda ^{-\frac{1}{2}}Ch^{\frac{3}{2}}\trb{v})\\
    \leq &\lambda ^{\frac{1}{2}}Ch^{k-2}\norm{w}_{k+2}\norm{v}+\lambda ^{-\frac{1}{2}}Ch^{k}\norm{w}_{k+2}\trb{v}.
  \end{align*}

  Let $k=2$, and we get the (\ref{l1-est2})-(\ref{l3-est3}).
\end{proof}

\subsection{Error estimates}
\begin{lemma}
  If $w\in H^{k+2}(\Omega)$, we have
  \begin{align}
    \label{trb-Qhw-w}\trb{Q_hw-w}\leq Ch^{k-1}\norm{w}_{k+2}.
  \end{align}
\end{lemma}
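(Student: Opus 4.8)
The plan is to expand $\DW(Q_hw-w)$ element by element through the definition $(\ref{def-wlaplace})$ of the discrete weak Laplacian, to recast the result entirely in terms of the single $L^2$-projection error $\eta:=Q_0w-w$, and then, testing against $\DW(Q_hw-w)$ itself, to bound each piece by $Ch_T^{k-1}\norm{w}_{k+1,T}$. Concretely, I would fix $T\in\T_h$, write $\eta=Q_0w-w$ on $T$, and substitute $Q_hw=\{Q_0w,\,Q_bw,\,Q_b(\G w\cdot\bn_e)\bn_e\}$ into $(\ref{def-wlaplace})$. By Lemma $\ref{operator-ex}$ one has $\DW w=\dQ_h\D w$, hence $(\DW w,\varphi)_T=(\D w,\varphi)_T$ for every $\varphi\in P_j(T)$; combining this with the two elementary identities $Q_b(Q_0w-Q_bw)=Q_b\eta$ on $\p T$ (linearity of $Q_b$ and $Q_b^2=Q_b$) and $(\G Q_0w-Q_b(\G w\cdot\bn_e)\bn_e)\cdot\bn=Q_b(\G\eta\cdot\bn)$ on $\p T$ --- where one uses that $\G Q_0w\cdot\bn_e\in P_{k-1}(e)$, so $Q_b$ acts as the identity on it --- the difference collapses to
\begin{align*}
(\DW(Q_hw-w),\varphi)_T=(\D\eta,\varphi)_T+\langle Q_b\eta,\G\varphi\cdot\bn\rangle_{\p T}-\langle Q_b(\G\eta\cdot\bn),\varphi\rangle_{\p T}.
\end{align*}
Integrating by parts twice in the volume term, $(\D\eta,\varphi)_T=(\eta,\D\varphi)_T-\langle\eta,\G\varphi\cdot\bn\rangle_{\p T}+\langle\G\eta\cdot\bn,\varphi\rangle_{\p T}$, and recombining the boundary terms so that only the projection errors $\eta-Q_b\eta$ and $\G\eta\cdot\bn-Q_b(\G\eta\cdot\bn)$ remain, the identity takes the form
\begin{align*}
(\DW(Q_hw-w),\varphi)_T=(\eta,\D\varphi)_T-\langle\eta-Q_b\eta,\G\varphi\cdot\bn\rangle_{\p T}+\langle\G\eta\cdot\bn-Q_b(\G\eta\cdot\bn),\varphi\rangle_{\p T}.
\end{align*}

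Next I would set $\varphi=\DW(Q_hw-w)|_T\in P_j(T)$ and estimate the three terms by routine scaling. The volume term is controlled by Cauchy--Schwarz, the inverse inequality $\normT{\D\varphi}\le Ch_T^{-2}\normT{\varphi}$, and the approximation bound $(\ref{Q0-est})$, giving $|(\eta,\D\varphi)_T|\le Ch_T^{k-1}\norm{w}_{k+1,T}\normT{\varphi}$. For the two boundary terms I would use the orthogonality of $Q_b$ on each edge, i.e.\ $\normPT{\eta-Q_b\eta}\le\normPT{\eta}$ and $\normPT{\G\eta\cdot\bn-Q_b(\G\eta\cdot\bn)}\le\normPT{\G\eta}$, the scaled trace inequalities $\normPT{\eta}^2\le C(h_T^{-1}\normT{\eta}^2+h_T\normT{\G\eta}^2)$ and $\normPT{\G\eta}^2\le C(h_T^{-1}\normT{\G\eta}^2+h_T\normT{\G(\G\eta)}^2)$, the bound $(\ref{Q0-est})$ for the seminorms $\norm{\eta}_{s,T}$ with $s=0,1,2$, and the discrete trace/inverse inequalities $\normPT{\G\varphi}\le Ch_T^{-3/2}\normT{\varphi}$, $\normPT{\varphi}\le Ch_T^{-1/2}\normT{\varphi}$; a short computation then bounds each boundary term by $Ch_T^{k-1}\norm{w}_{k+1,T}\normT{\varphi}$ as well. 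Hence $\normT{\DW(Q_hw-w)}\le Ch_T^{k-1}\norm{w}_{k+1,T}$, and squaring, summing over $T\in\T_h$, and using $h_T\le h$ together with $\norm{w}_{k+1}\le\norm{w}_{k+2}$ gives $\trb{Q_hw-w}^2=\sumT\normT{\DW(Q_hw-w)}^2\le Ch^{2(k-1)}\norm{w}_{k+2}^2$, which is the asserted estimate.

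I expect the only genuinely delicate point to be the algebraic reduction in the first paragraph: one must carefully track how $Q_b$ acts on the trace of $Q_0w$ along $\p T$ and, above all, observe that $\G Q_0w\cdot\bn_e$ already lies in $P_{k-1}(e)$, so that the $v_n$-component of $Q_hw$ merges with it into the clean projection error $Q_b(\G\eta\cdot\bn)$ --- this is exactly where the reduced edge degree is felt. Once that structure is exposed, the half power of $h_T$ lost in each boundary estimate produces precisely the suboptimal rate $h^{k-1}$, and everything else is standard approximation-theory bookkeeping.
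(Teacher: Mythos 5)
Your proposal is correct and follows essentially the same route as the paper's own proof: both expand $\DW(Q_hw-w)$ via the definition of the weak Laplacian, integrate by parts to reduce everything to projection errors of $Q_0w-w$ and of the normal derivative, test against $q=\DW(Q_hw-w)$, and close with trace, inverse, and approximation inequalities to obtain the rate $h^{k-1}$. Your observation that $\G Q_0w\cdot\bn_e\in P_{k-1}(e)$ lets the boundary terms collapse into the clean forms $\eta-Q_b\eta$ and $\G\eta\cdot\bn-Q_b(\G\eta\cdot\bn)$ is a slightly tidier bookkeeping of the same identity the paper uses.
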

\begin{proof}
  Using the definition of the $\DW$, the integration by parts, 
  \begin{align*}
    (\DW (Q_hw-w),q)_T=&(\D (Q_0w-w),q)_T+\langle Q_b(Q_0w-w-(Q_bw-w)),\G q\cdot\bn\rangle _{\p T}\\
    &-\langle \G (Q_0w-w)\cdot\bn-(Q_n(\G w\cdot\bn_e)\bn_e\cdot\bn-\G w\cdot\bn),q\rangle _{\p T}\\
    =&(Q_0w-w,\D q)_T-\langle Q_0w-w,\G q\cdot\bn\rangle _{\p T}+\langle \G (Q_0w-w)\cdot\bn,q\rangle _{\p T}\\
    &+\langle Q_b(Q_0w-Q_bw),\G q\cdot\bn\rangle _{\p T}-\langle \G (Q_0w-w)\cdot\bn-(Q_n(\G w\cdot\bn)-\G w\cdot\bn),q\rangle _{\p T}\\
    =&(Q_0w-w,\D q)_T+\langle Q_b(Q_0w-Q_bw)-(Q_0w-w),\G q\cdot\bn\rangle _{\p T}+\langle Q_n(\G w\cdot\bn)-\G w\cdot\bn,q\rangle _{\p T}\\
    \leq &\normT{Q_0w-w}\normT{\D q}+(\normPT{Q_b(Q_0w-w)}+\normPT{Q_0w-w})\normPT{\G q}\\
    &+\normPT{Q_n (\G w\cdot\bn)-\G w\cdot\bn}\normPT{q}\\
    \leq &Ch^{k+1}|w|_{k+1,T}h^{-2}\normT{q}+2\normPT{Q_0w-w}Ch^{-\frac{3}{2}}\normT{q}+Ch^{-\frac{1}{2}}h^k|\G w\cdot\bn|_{k,T}h^{-\frac{1}{2}}\normT{q}\\
    \leq &Ch^{k-1}|w|_{k+1,T}\normT{q}+Ch^{-\frac{1}{2}}h^{k+1}|w|_{k+1,T}h^{-\frac{3}{2}}\normT{q}+Ch^{k-1}|w|_{k+1,T}\normT{q}\\
    \leq &Ch^{k-1}|w|_{k+1,T}\normT{q}
  \end{align*}
  hold true for any $q\in P_j[T]$. Then we choose $q=\DW (Q_hw-w)$, (\ref{trb-Qhw-w}) is proven.
\end{proof}

\begin{theorem}
  Suppose $u\in H^{k+2}(\Omega)$, and we have
  \begin{align}
    \label{trb-Qhu-uh}\trb{Q_hu-u_h}\leq &Ch^{k-1}\norm{u}_{k+2},\\
    \label{trb-u-uh}\trb{u-u_h}\leq &Ch^{k-1}\norm{u}_{k+2}.
  \end{align}
\end{theorem}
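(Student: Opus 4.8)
The plan is to test the second error equation (\ref{err-equ2}) against $v=\varepsilon_h=Q_hu-u_h$, so that its left-hand side becomes exactly $\trb{\varepsilon_h}^2$. This choice is admissible: $u_h\in V_h^0$ by the numerical scheme, and $Q_hu\in V_h^0$ because $u\in H_0^2(\Omega)$ forces the boundary components $Q_bu$ and $Q_n(\G u\cdot\bn_e)$ to vanish on $\partial\Omega$; hence $\varepsilon_h\in V_h^0$, where $\trb{\cdot}$ has already been established to be a norm. With this substitution the identity becomes
\[
\trb{\varepsilon_h}^2=-l_1(u,\varepsilon_h)+l_2(u,\varepsilon_h)-l_3(u,\varepsilon_h)+(\DW(Q_hu-u),\DW\varepsilon_h)_{\T_h},
\]
and the whole proof reduces to bounding these four terms by a constant times $h^{k-1}\norm{u}_{k+2}\trb{\varepsilon_h}$.

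For the first three terms I would apply the estimates (\ref{l1-est1})--(\ref{l3-est1}) with $w=u$, which give $|l_1(u,\varepsilon_h)|+|l_2(u,\varepsilon_h)|\le Ch^{k}\norm{u}_{k+2}\trb{\varepsilon_h}$ and $|l_3(u,\varepsilon_h)|\le Ch^{k-1}\norm{u}_{k+2}\trb{\varepsilon_h}$; the $l_3$ contribution is the weakest and is what pins the final rate at $h^{k-1}$ rather than $h^k$. For the last term, the Cauchy--Schwarz inequality in the $\DW$-inner product gives $|(\DW(Q_hu-u),\DW\varepsilon_h)_{\T_h}|\le\trb{Q_hu-u}\,\trb{\varepsilon_h}$, and then (\ref{trb-Qhw-w}) with $w=u$ yields $\trb{Q_hu-u}\le Ch^{k-1}\norm{u}_{k+2}$. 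Collecting the four bounds produces $\trb{\varepsilon_h}^2\le Ch^{k-1}\norm{u}_{k+2}\trb{\varepsilon_h}$; dividing by $\trb{\varepsilon_h}$ (the case $\trb{\varepsilon_h}=0$ being trivial) gives (\ref{trb-Qhu-uh}).

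The second estimate (\ref{trb-u-uh}) then follows immediately from the triangle inequality $\trb{u-u_h}\le\trb{u-Q_hu}+\trb{Q_hu-u_h}$, using (\ref{trb-Qhw-w}) for the first summand and (\ref{trb-Qhu-uh}) for the second, both of order $h^{k-1}\norm{u}_{k+2}$.

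I do not anticipate a serious obstacle: the error equation, the three $l_i$-bounds, the projection estimate $\trb{Q_hw-w}\le Ch^{k-1}\norm{w}_{k+2}$, and the fact that $\trb{\cdot}$ is a norm are all already in hand, so the argument is essentially an exercise in combining the sharpest available powers of $h$. The only points deserving a line of care are verifying that $\varepsilon_h\in V_h^0$ (so the error equation may legitimately be tested against it) and noting explicitly that the $l_3$ term governs the overall $h^{k-1}$ convergence rate.
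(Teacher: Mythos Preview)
Your proposal is correct and follows essentially the same argument as the paper: test (\ref{err-equ2}) with $v=\varepsilon_h$, bound the three $l_i$-terms via (\ref{l1-est1})--(\ref{l3-est1}) and the remaining term via Cauchy--Schwarz plus (\ref{trb-Qhw-w}), then cancel one factor of $\trb{\varepsilon_h}$ and finish with the triangle inequality. Your additional remarks on why $\varepsilon_h\in V_h^0$ and that $l_3$ dictates the $h^{k-1}$ rate are accurate refinements the paper leaves implicit.
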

\begin{proof}
  Setting $v=Q_hu-u_h$ in (\ref{err-equ2}), and using (\ref{l1-est1})-(\ref{l3-est1}) and (\ref{trb-Qhw-w}), we find
  \begin{align*}
    \trb{\varepsilon _h}^2\leq &|l_1(u,\varepsilon _h)|+|l_2(u,\varepsilon _h)|+|l_3(u,\varepsilon _h)|+\trb{Q_hu-u}\trb{\varepsilon _h}\\
    \leq & Ch^k\norm{u}_{k+2}\trb{\varepsilon _h}+Ch^{k-1}\norm{u}_{k+2}\trb{\varepsilon _h}\\
    \leq &Ch^{k-1}\norm{u}_{k+2}\trb{\varepsilon _h},
  \end{align*}
  which implies (\ref{trb-Qhu-uh}). This yields
  \begin{align*}
    \trb{u-u_h} \leq &\trb{u-Q_hu}+\trb{Q_hu-u_h}\\
    \leq &Ch^{k-1}\norm{u}_{k+2}.
  \end{align*}
\end{proof}

Consider the following dual problem:
  \begin{align}
    \label{dual-equ1}\Delta^2\phi&=\varepsilon _0, \qquad in~\Omega,\\
    \label{dual-equ2}\phi&=0,\qquad on ~\partial \Omega,\\
    \label{dual-equ3}\pa{\phi}{\bn}&=0,\qquad on ~\partial \Omega,
    \end{align}
And assume $\norm{\phi}_4\leq C\norm{\varepsilon _0}$, we get the $L^2$ norm of the error as follows.
\begin{theorem}
  Suppose $u\in H^{k+2}(\Omega)$, and we obtained
  \begin{align}
    \label{L2-Q0u-u0}\norm{Q_0u-u_0}\leq &~Ch^{k+k_0-2}\norm{u}_{k+2},\\
    \label{L2-u-u0}\norm{u-u_0}\leq &~Ch^{k+k_0-2}\norm{u}_{k+2},
  \end{align}
  where $k_0=\min{\{k,3\}}$.
\end{theorem}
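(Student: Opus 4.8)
The plan is a standard Aubin--Nitsche duality argument based on (\ref{dual-equ1})--(\ref{dual-equ3}) with the data $\varepsilon_0=Q_0u-u_0$. Since $\phi\in H^2_0(\Omega)$ we have $Q_h\phi\in V_h^0$, and carrying out the computation that produced the error equation with $\phi$ in place of $u$ (using $\DW\phi=\dQ_h\D\phi$ from Lemma \ref{operator-ex}) gives, for all $v\in V_h^0$,
\begin{align*}
(\varepsilon_0,v_0)_{\T_h}=(\DW\phi,\DW v)_{\T_h}+l_1(\phi,v)-l_2(\phi,v)+l_3(\phi,v).
\end{align*}
Taking $v=\varepsilon_h=Q_hu-u_h$, whose interior component is $\varepsilon_0$, splitting $\varepsilon_h=e_h+(Q_hu-u)$ with $e_h=u-u_h$, writing $\DW\phi=\DW(Q_h\phi)+\DW(\phi-Q_h\phi)$, and inserting the error equation (\ref{err-equ1}) with the test function $Q_h\phi$, I would obtain
\begin{align*}
\norm{\varepsilon_0}^2=&-l_1(u,Q_h\phi)+l_2(u,Q_h\phi)-l_3(u,Q_h\phi)+l_1(\phi,\varepsilon_h)-l_2(\phi,\varepsilon_h)+l_3(\phi,\varepsilon_h)\\
&+\big(\DW(\phi-Q_h\phi),\DW e_h\big)_{\T_h}+\big(\dQ_h\D\phi,\DW(Q_hu-u)\big)_{\T_h},
\end{align*}
so the task is to bound these seven terms by $Ch^{k+k_0-2}\norm{u}_{k+2}\norm{\varepsilon_0}$ with $k_0=\min\{k,3\}$, using $\trb{e_h}+\trb{\varepsilon_h}\le Ch^{k-1}\norm{u}_{k+2}$ from (\ref{trb-u-uh})--(\ref{trb-Qhu-uh}) and the regularity hypothesis $\norm{\phi}_4\le C\norm{\varepsilon_0}$.

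The terms $l_i(\phi,\varepsilon_h)$ come directly from (\ref{l1-est2})--(\ref{l3-est3}): $l_1$ and $l_2$ are bounded by $Ch^{k+1}\norm{u}_{k+2}\norm{\varepsilon_0}$, and for $l_3$ I would use (\ref{l3-est3}), move the resulting term $\lambda^{1/2}C\norm{\varepsilon_0}^2$ to the left-hand side for a small fixed $\lambda$, and keep $C\lambda^{-1/2}h^{k+1}\norm{u}_{k+2}\norm{\varepsilon_0}$; for $k=2$ the cheaper bound (\ref{l3-est2}) already gives $Ch^2\norm{u}_4\norm{\varepsilon_0}$. The term $\big(\DW(\phi-Q_h\phi),\DW e_h\big)_{\T_h}\le\trb{Q_h\phi-\phi}\,\trb{e_h}$ is the one that pins down the exponent: repeating the proof of (\ref{trb-Qhw-w}) with data in $H^4$ rather than $H^{k+2}$ replaces $h^{k-1}$ by $h^{k_0-1}$, so $\trb{Q_h\phi-\phi}\le Ch^{k_0-1}\norm{\phi}_4$, and this term is $\le Ch^{k+k_0-2}\norm{u}_{k+2}\norm{\varepsilon_0}$.

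For the two remaining groups the crude bound $\norm{\cdot}\,\trb{\cdot}$ loses too much, and one must use the orthogonalities hidden in the definitions. In $l_i(u,Q_h\phi)$ I would first rewrite, for instance, $Q_b\big((Q_h\phi)_0-(Q_h\phi)_b\big)=Q_b(Q_0\phi-\phi)$ in $l_1$ and use $\langle\G(Q_0\D u)\cdot\bn,Q_0\phi-Q_bQ_0\phi\rangle_{\p\T_h}=0$ in $l_3$, and then apply Cauchy--Schwarz with the weighted trace splittings used in (\ref{l1-est1})--(\ref{l3-est1}) against the projection bounds $\normPT{Q_0\phi-\phi}\le Ch^{k_0+1/2}\norm{\phi}_{4,T}$ and $\normPT{\phi-Q_b\phi}\le Ch^{k_0-1/2}\norm{\phi}_{4,T}$, where the exponent $k_0$ is exactly the capped approximation order of an $H^4$ function; this yields $|l_i(u,Q_h\phi)|\le Ch^{k+k_0-2}\norm{u}_{k+2}\norm{\varepsilon_0}$. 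For $\big(\dQ_h\D\phi,\DW(Q_hu-u)\big)_{\T_h}$ I would expand $\DW(Q_hu-u)$ through its definition (\ref{def-wlaplace}), test it against the polynomial $\dQ_h\D\phi$, and integrate by parts: then $Q_0u-u$ pairs with $\D(\dQ_h\D\phi)$, which vanishes (or is $O(h^2)$-small) by orthogonality to $P_k(T)$, while the boundary remainders $(I-Q_b)(Q_0u-u)$ and $(I-Q_b)\big(\G(Q_0u-u)\cdot\bn_e\big)$ --- which are $L^2(e)$-orthogonal to $P_{k-1}(e)$ --- are paired only with the $O(h^{1/2})$ (or smaller) edge oscillations of $\dQ_h\D\phi$ and $\G(\dQ_h\D\phi)$, small because $\D\phi\in H^2(\Omega)$ is continuous; Cauchy--Schwarz with the projection bounds for $u$ then gives $Ch^{k+1}\norm{u}_{k+2}\norm{\varepsilon_0}$.

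Collecting the seven estimates and absorbing the $\lambda^{1/2}C\norm{\varepsilon_0}^2$ contribution gives $\norm{\varepsilon_0}^2\le Ch^{k+k_0-2}\norm{u}_{k+2}\norm{\varepsilon_0}$, which is (\ref{L2-Q0u-u0}); then (\ref{L2-u-u0}) follows from the triangle inequality and the projection estimate (\ref{Q0-est}). I expect the main difficulty to be in these last two groups: teasing out the embedded orthogonalities so that $\big(\dQ_h\D\phi,\DW(Q_hu-u)\big)_{\T_h}$ and $l_i(u,Q_h\phi)$ gain the extra powers of $h$ over the naive rate, and re-deriving $\trb{Q_h\phi-\phi}\le Ch^{\min\{k,3\}-1}\norm{\phi}_4$ for data that is only $H^4$ --- the step that introduces the factor $k_0$.
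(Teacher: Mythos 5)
Your proposal is correct and follows the paper's proof essentially step for step: the same Aubin--Nitsche duality with data $\varepsilon_0$, the same splitting of $\norm{\varepsilon_0}^2$ into the terms $l_i(u,Q_h\phi)$, $l_i(\phi,\varepsilon_h)$, $\trb{Q_h\phi-\phi}\,\trb{\cdot}$ and $(\dQ_h\D\phi,\DW (Q_hu-u))_{\T_h}$ (your seven terms are an algebraic regrouping of the paper's $I_1,\dots,I_8$), the same use of (\ref{l1-est2})--(\ref{l3-est3}) with absorption of the $\lambda^{1/2}C\norm{\varepsilon_0}^2$ term, and the same identification of $\trb{Q_h\phi-\phi}\le Ch^{k_0-1}\norm{\phi}_4$ as the step that produces the cap $k_0=\min\{k,3\}$ (you are right that this requires rerunning the proof of (\ref{trb-Qhw-w}) with only $H^4$ data, which the paper uses tacitly). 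The one genuine divergence is the superconvergent term $(\dQ_h\D\phi,\DW (Q_hu-u))_{\T_h}$: the paper introduces the $L^2$ projection $P_1$ onto linear polynomials, proves $(\DW (Q_hu-u),P_1\D\phi)_{\T_h}=0$ by expanding the definition and integrating by parts, and then pays only $\norm{\D\phi-P_1\D\phi}\le Ch^2\norm{\phi}_4$ against $\trb{Q_hu-u}$; you instead expand $\DW (Q_hu-u)$ directly against $\dQ_h\D\phi$ and exploit the element and edge orthogonalities of $Q_0u-u$ term by term. Both routes rest on the same cancellations and both reach $O(h^{k+1})$ for this term; the paper's $P_1$ device is tidier because it sidesteps the case analysis on whether $\D(\dQ_h\D\phi)\in P_{j-2}(T)$ is orthogonal to $Q_0u-u$ (it need not be when $j>k+2$), a point your sketch acknowledges only parenthetically and would have to quantify, e.g.\ via an inverse inequality together with (\ref{Qh-est}).
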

\begin{proof}
  \begin{align*}
    \norm{\varepsilon _0}^2=&(\D ^2\phi,\varepsilon _0)_{\T _h}\\
    =&(\DW \phi,\DW \varepsilon _h)_{\T _h}+l_1(\phi,\varepsilon _h)-l_2(\phi ,\varepsilon _h)+l_3(\phi ,\varepsilon _h)\\
    =&(\DW Q_h\phi,\DW\varepsilon _h)_{\T _h}+(\DW (\phi-Q_h\phi),\DW \varepsilon _h)_{\T _h}+l_1(\phi ,\varepsilon _h)-l_2(\phi ,\varepsilon _h)+l_3(\phi ,\varepsilon _h)\\
    =&-l_1(u,Q_h\phi)+l_2(u,Q_h\phi)-l_3(u,Q_h\phi)+(\DW (Q_hu-u),\DW Q_h\phi)_{\T _h}\\
    &+(\DW (\phi-Q_h\phi),\DW \varepsilon _h)_{\T _h}+l_1(\phi ,\varepsilon _h)-l_2(\phi ,\varepsilon _h)+l_3(\phi ,\varepsilon _h)\\
    =&I_1+I_2+I_3+I_4+I_5+I_6+I_7+I_8.
  \end{align*}
  For $I_1$, $I_2$ and $I_3$, using the Cauchy-Schwarz inequality, (\ref{Qh-est-h1})-(\ref{Qh-est-h3}), the definition of the projection operator $Q_b$ and $Q_n$, the trace inequality and the projection inequality, we can get
  \begin{align*}
    |I_1|=&|\langle \G (\D u-\dQ_h\D u)\cdot\bn,Q_b(Q_0\phi-Q_b\phi)\rangle _{\p\T_h}|\\
    \leq &\Big(\sumT h_T^3\normPT{\G (\D u-\dQ_h\D u)}^2\Big)^{\frac{1}{2}}\Big(\sumT h_T ^{-3}\normPT{Q_b(Q_0\phi-Q_b\phi)}^2\Big)^{\frac{1}{2}}\\
    \leq &Ch^k\norm{u}_{k+2}\Big(\sumT h_T ^{-3}\normPT{Q_0\phi-\phi}^2\Big)^{\frac{1}{2}}\\
    \leq &Ch^k\norm{u}_{k+2}\Big(\sumT (h_T ^{-4}\normT{Q_0\phi-\phi}^2+h_T^{-2}\normT{\G(Q_0\phi-\phi)}^2)\Big)^{\frac{1}{2}}\\
    \leq &Ch^k\norm{u}_{k+2}Ch^{k_0-1}\norm{\phi}_4\\
    \leq &Ch^{k+k_0-1}\norm{u}_{k+2}\norm{\phi}_4,
  \end{align*}
  \begin{align*}
    |I_2|=&|\langle \D u-\dQ_h\D u,(\G Q_0\phi-Q_n(\G\phi\cdot\bn_e)\bn_e)\cdot\bn\rangle _{\p\T_h}|\\
    \leq &\Big(\sumT h_T\normPT{\D u-\dQ_h\D u}^2\Big)^{\frac{1}{2}}\Big(\sumT h_T^{-1}\normPT{\G (Q_0\phi)\cdot\bn-Q_n(\G\phi\cdot\bn)}^2\Big)^{\frac{1}{2}}\\
    \leq &Ch^k\norm{u}_{k+2}\Big(\sumT h_T^{-1}\normPT{\G (Q_0\phi-\phi)\cdot\bn}\Big)^{\frac{1}{2}}\\
    \leq &Ch^k\norm{u}_{k+2}\Big(\sumT h_T^{-1}\normPT{\G (Q_0\phi-\phi)}\Big)^{\frac{1}{2}}\\
    \leq &Ch^k\norm{u}_{k+2}h^{-2}Ch^{k_0+1}\norm{\phi}_4\\
    \leq &Ch^{k+k_0-1}\norm{u}_{k+2}\norm{\phi}_4,
  \end{align*}
  \begin{align*}
    |I_3|=&|\langle \G (\D u)\cdot\bn,Q_0\phi-Q_bQ_0\phi\rangle _{\p\T_h}|\\
    =&|\langle \G (\D u-Q_0\D u)\cdot\bn,Q_0\phi-Q_bQ_0\phi\rangle _{\p\T_h}|\\
    =&|\langle \G (\D u-Q_0\D u)\cdot\bn,Q_0\phi-\phi +\phi -Q_b\phi +Q_b\phi -Q_bQ_0\phi\rangle _{\p\T_h}|\\
    =&|\langle \G (\D u-Q_0\D u)\cdot\bn,Q_0\phi-\phi\rangle _{\p\T_h}+\langle \G (\D u-Q_0\D u)\cdot\bn,Q_b\phi -Q_bQ_0\phi\rangle _{\p\T_h}|\\
    \leq &Ch^{-\frac{3}{2}}\Big(\sumT h_T^3\normPT{\G (\D u-Q_0\D u)}^2\Big)^{\frac{1}{2}}\Big(\sumT \normPT{Q_0\phi-\phi}^2\Big)^{\frac{1}{2}}\\
    \leq &Ch^{-\frac{3}{2}}Ch^k\norm{\D u}_{k}Ch^{-\frac{1}{2}}Ch^{k_0+1}\norm{\phi}_4\\
    \leq &Ch^{k+k_0-1}\norm{u}_{k+2}\norm{\phi}_4,
  \end{align*}
  where we utilize
  \begin{align*}
    \langle \G (\D u-Q_0\D u)\cdot\bn,\phi -Q_b\phi\rangle _{\p\T_h}=\langle \G (\D u)\cdot\bn,\phi -Q_b\phi\rangle _{\p\T_h}-\langle \G (Q_0\D u)\cdot\bn,\phi -Q_b\phi\rangle _{\p\T_h}=0.
  \end{align*}
  
  If we assume $P_1$ is the $L^2$ projection to the polynomials, there exists the following result
  \begin{align*}
    &(\DW (Q_hu-u),P_1\D\phi)_{\T _h}\\
    =&(\D (Q_0u-u),P_1\D\phi)_{\T _h}+\langle Q_b[Q_0u-u-(Q_bu-u)],\G(P_1\D\phi)\cdot\bn\rangle _{\p\T_h}\\
    &-\langle \G (Q_0u-u)\cdot\bn-Q_n (\G u\cdot\bn_e)\bn_e\cdot\bn+\G u\cdot\bn,P_1\D\phi\rangle _{\p\T_h}\\
    =&-(\G(Q_0u-u),\G P_1\D \phi)_{\T_h}+\langle \G (Q_0u-u)\cdot\bn,P_1\D\phi\rangle _{\p\T_h}+\langle Q_b(Q_0u-Q_bu),\G (P_1\D \phi)\cdot\bn\rangle _{\p\T _h}\\
    &-\langle \G (Q_0u-u)\cdot\bn-Q_n (\G u\cdot\bn)+\G u\cdot\bn,P_1\D\phi\rangle _{\p\T_h}\\
    =&(Q_0u-u,\D P_1\D \phi)_{\T_h}-\langle Q_0u-u,\G P_1\D\phi\cdot\bn\rangle _{\p\T_h}+\langle Q_b(Q_0u-Q_bu),\G (P_1\D \phi)\cdot\bn\rangle _{\p\T _h}\\
    &+\langle Q_n(\G u\cdot\bn)-\G u\cdot\bn,P_1\D\phi\rangle _{\p\T_h}\\
    =&0-\langle Q_0u-u,\G P_1\D\phi\cdot\bn\rangle _{\p\T_h}+\langle Q_0u-u,\G (P_1\D \phi)\cdot\bn\rangle _{\p\T _h}+0\\
    =&0.
  \end{align*}
  By the above equation, the (\ref{operator-ex}), the Cauchy-Schwarz inequality, the definition of $\dQ_h$, (\ref{trb-Qhw-w}) and the projection inequality, we know
  \begin{align*}
    |I_4|=&|(\DW (Q_hu-u),\DW Q_h\phi)_{\T_h}|\\
    \leq &|(\DW (Q_hu-u),\DW (Q_h\phi-\phi))_{\T_h}|+|(\DW (Q_hu-u),\DW \phi)_{\T_h}|\\
    \leq &\trb{Q_hu-u}\trb{Q_h\phi -\phi}+|(\DW (Q_hu-u),\dQ_h\D \phi)_{\T_h}|\\
    \leq &Ch^{k-1}\norm{u}_{k+2}Ch^{k_0-1}\norm{\phi}_4+|(\DW (Q_hu-u),\D \phi)_{\T_h}|\\
    \leq &Ch^{k+k_0-2}\norm{u}_{k+2}\norm{\phi}_4+|(\DW (Q_hu-u),\D \phi-P_1\D \phi)_{\T_h}|\\
    \leq &Ch^{k+k_0-2}\norm{u}_{k+2}\norm{\phi}_4+\trb{Q_hu-u}\norm{\D\phi -P_1\D \phi}\\
    \leq &Ch^{k+k_0-2}\norm{u}_{k+2}\norm{\phi}_4+Ch^{k-1}\norm{u}_{k+2}Ch^2\norm{\phi}_4\\
    \leq &Ch^{k+k_0-2}\norm{u}_{k+2}\norm{\phi}_4.
  \end{align*}

  From (\ref{trb-Qhw-w}), (\ref{trb-Qhu-uh}) and (\ref{l1-est2})-(\ref{l3-est3}), we find
  \begin{align*}
    |I_5|=&|(\DW (\phi-Q_h\phi),\DW \varepsilon _h)_{\T_h}|\\
    \leq &\trb{\phi -Q_h\phi}\trb{\varepsilon _h}\\
    \leq &Ch^{k_0-1}\norm{\phi}_4Ch^{k-1}\norm{u}_{k+2}\\
    \leq &Ch^{k+k_0-2}\norm{u}_{k+2}\norm{\phi}_4,
  \end{align*}
  \begin{align*}
    |I_6|=&|l_1(\phi ,\varepsilon _h)|\\
    \leq &Ch^2\norm{\phi}_4\trb{\varepsilon _h}\\
    \leq &Ch^2\norm{\phi}_4Ch^{k-1}\norm{u}_{k+2}\\
    \leq &Ch^{k+1}\norm{u}_{k+2}\norm{\phi}_4,
  \end{align*}
  \begin{align*}
    |I_7|=&|l_2(\phi,\varepsilon _h)|\\
    \leq &Ch^2\norm{\phi}_4\trb{\varepsilon _h}\\
    \leq &Ch^2\norm{\phi}_4Ch^{k-1}\norm{u}_{k+2}\\
    \leq &Ch^{k+1}\norm{u}_{k+2}\norm{\phi}_4,
  \end{align*}
  \begin{align*}
    |I_8|=&|l_3(\phi,\varepsilon _h)|\\
    \leq &\lambda ^{\frac{1}{2}}C\norm{\phi}_4\norm{\varepsilon _h}+\lambda ^{-\frac{1}{2}}Ch^2\norm{\phi}_4\trb{\varepsilon _h}\\
    \leq &\lambda ^{\frac{1}{2}}C\norm{\phi}_4\norm{\varepsilon _0}+\lambda ^{-\frac{1}{2}}Ch^{k+1}\norm{\phi}_4\norm{u}_{k+2}.
  \end{align*}

  Therefore, we have
  \begin{align*}
    \norm{\varepsilon _0}^2\leq &\lambda ^{\frac{1}{2}}C\norm{\phi}_4\norm{\varepsilon _0}+\lambda ^{-\frac{1}{2}}Ch^{k+1}\norm{\phi}_4\norm{u}_{k+2}+Ch^{k+1}\norm{u}_{k+2}\norm{\phi}_4+Ch^{k+k_0-2}\norm{u}_{k+2}\norm{\phi}_4\\
    &+Ch^{k+k_0-1}\norm{u}_{k+2}\norm{\phi}_4,
  \end{align*}
  Let $\lambda$ such that $\lambda ^{\frac{1}{2}}C\norm{\phi}_4\leq \frac{1}{2}\norm{\varepsilon _0}$, then we get
  \begin{align*}
    \norm{\varepsilon _0}^2\leq &\frac{1}{2}\norm{\varepsilon _0}^2+Ch^{k+1}\norm{\phi}_4\norm{u}_{k+2}+Ch^{k+1}\norm{u}_{k+2}\norm{\phi}_4+Ch^{k+k_0-2}\norm{u}_{k+2}\norm{\phi}_4\\
    &+Ch^{k+k_0-1}\norm{u}_{k+2}\norm{\phi}_4,\\
    \leq &\frac{1}{2}\norm{\varepsilon _0}^2+Ch^{k+1}\norm{\varepsilon _0}\norm{u}_{k+2}+Ch^{k+1}\norm{u}_{k+2}\norm{\varepsilon _0}+Ch^{k+k_0-2}\norm{u}_{k+2}\norm{\varepsilon _0}\\
    &+Ch^{k+k_0-1}\norm{u}_{k+2}\norm{\varepsilon _0},
  \end{align*}
  which implies
  \begin{align*}
    \norm{\varepsilon _0}\leq Ch^{k+k_0-2}\norm{u}_{k+2}.
  \end{align*}
\end{proof}

\section{Numerical Experiments}

In this section, we shall utilize examples to verify the rationality of the theoretical results.
\subsection{Example 1}
We consider the square domain $\Omega = (0,1)^2$ and the exact solution as follows 
\begin{align}\label{ex1}
  u = x^2(1-x)^2y^2(1-y)^2.
\end{align}

On triangular meshes, we set $j=k+2$. When $k=2$, the convergence orders appear as $O(h^1)$ under the $H^2$ norm and $O(h^2)$ under the $L^2$ norm in Table \ref{ex1_tri_P2}. The convergence orders for $k=3$ in the $H^2$ and $L^2$ norms are of orders $O(h^2)$ and $O(h^4)$ in Table \ref{ex1_tri_P3}. And the results shown in Tables \ref{ex1_tri_P2}-\ref{ex1_tri_P3} coincide with the theorems in the previous section.

\begin{table}[htbp]
  \centering
  \caption{Error values and convergence rates for (\ref{ex1}) on triangular meshes with $k=2$}\label{ex1_tri_P2}
    \begin{tabular}{c|c|c|c|c|c|c}
    \hline
    $n$     & $\trb{u-u_h}$ & Rate  & $\norm{u-u_h}_{2,h}$ & Rate  & $\norm{u-u_0}$ & Rate \\
    \hline
    8     & 9.5700E-03 & ---   & 1.5165E-02 & ---   & 4.6178E-05 & --- \\
    16    & 4.7707E-03 & 1.00  & 7.5996E-03 & 1.00  & 1.1483E-05 & 2.01  \\
    32    & 2.3665E-03 & 1.01  & 3.7959E-03 & 1.00  & 2.8221E-06 & 2.02  \\
    64    & 1.1760E-03 & 1.01  & 1.8959E-03 & 1.00  & 6.9606E-07 & 2.02  \\
    128   & 5.8582E-04 & 1.01  & 9.4727E-04 & 1.00  & 1.7229E-07 & 2.01  \\
    \hline
    \end{tabular}%
\end{table}%

\begin{table}[htbp]
  \centering
  \caption{Error values and convergence rates for (\ref{ex1}) on triangular meshes with $k=3$}\label{ex1_tri_P3}
    \begin{tabular}{c|c|c|c|c|c|c}
    \hline
    $n$     & $\trb{u-u_h}$ & Rate  & $\norm{u-u_h}_{2,h}$ & Rate  & $\norm{u-u_0}$ & Rate \\
    \hline
    2     & 1.6283E-02 & ---   & 2.5831E-02 & ---   & 1.4332E-04 & ---\\
    4     & 4.0486E-03 & 2.01  & 7.0678E-03 & 1.87  & 9.8818E-06 & 3.86  \\
    8     & 1.0417E-03 & 1.96  & 1.8704E-03 & 1.92  & 6.7580E-07 & 3.87  \\
    16    & 2.6142E-04 & 1.99  & 4.7555E-04 & 1.98  & 4.2635E-08 & 3.99  \\
    32    & 6.5097E-05 & 2.01  & 1.1948E-04 & 1.99  & 2.6431E-09 & 4.01  \\
    \hline
    \end{tabular}%
\end{table}%

On polygonal meshes, let $j=k+4$ and Tables \ref{ex1_poly_P2}-\ref{ex1_poly_P3} show the the convergence rates coincident with our theoretical analysis.

\begin{table}[htbp]
  \centering
  \caption{Error values and convergence rates for (\ref{ex1}) on polygonal meshes with $k=2$}\label{ex1_poly_P2}
    \begin{tabular}{c|c|c|c|c|c|c}
    \hline
    $n$     & $\trb{u-u_h}$ & Rate  & $\norm{u-u_h}_{2,h}$ & Rate  &$\norm{u-u_0}$ & Rate \\
    \hline
    16    & 1.2242E-02 & ---   & 1.0176E-02 & ---   & 7.4200E-05 & --- \\
    32    & 6.5755E-03 & 0.90  & 5.2017E-03 & 0.97  & 2.1480E-05 & 1.79  \\
    64    & 3.3740E-03 & 0.96  & 2.6315E-03 & 0.98  & 5.6675E-06 & 1.92  \\
    128   & 1.7046E-03 & 0.99  & 1.3245E-03 & 0.99  & 1.4475E-06 & 1.97  \\
    256   & 8.5618E-04 & 0.99  & 6.6462E-04 & 0.99  & 3.6900E-07 & 1.97  \\
    \hline
    \end{tabular}%
\end{table}%

\begin{table}[htbp]
  \centering
  \caption{Error values and convergence rates for (\ref{ex1}) on polygonal meshes with $k=3$}\label{ex1_poly_P3}
    \begin{tabular}{c|c|c|c|c|c|c}
    \hline
    $n$     & $\trb{u-u_h}$ & Rate  & $\norm{u-u_h}_{2,h}$ & Rate  & $\norm{u-u_0}$ & Rate \\
    \hline
    4     & 6.9507E-03 & ---   & 8.4116E-03 & ---   & 2.5430E-05 & --- \\
    8     & 2.2107E-03 & 1.65  & 2.5972E-03 & 1.70  & 2.5638E-06 & 3.31  \\
    16    & 6.1553E-04 & 1.84  & 7.1587E-04 & 1.86  & 2.0686E-07 & 3.63  \\
    32    & 1.5974E-04 & 1.95  & 1.8464E-04 & 1.96  & 1.4303E-08 & 3.85  \\
    64    & 4.0278E-05 & 1.99  & 4.6334E-05 & 1.99  & 8.2573E-10 & 4.11  \\
    \hline
    \end{tabular}%
\end{table}%


\subsection{Example 2}
We choose the same solution area as in the above example and the exact solution is
\begin{align}\label{ex2}
  u = \sin {(\pi x)}\sin{(\pi y)}.
\end{align}  
Set $j=k+2$ on triangular meshes and $j=k+4$ on polygonal meshes, then the related results are shown in Tables \ref{ex2_tri_P2}-\ref{ex2_tri_P3} and \ref{ex2_poly_P2}-\ref{ex2_poly_P3}, respectively. These convergence orders agree with the theoretical results.

\begin{table}[htbp]
  \centering
  \caption{Error values and convergence rates for (\ref{ex2}) on triangular meshes with $k=2$}\label{ex2_tri_P2}
    \begin{tabular}{c|c|c|c|c|c|c}
    \hline
    $n$     & $\trb{u-u_h}$ & Rate  & $\norm{u-u_h}_{2,h}$ & Rate  & $\norm{u-u_0}$ & Rate \\
    \hline
    8     & 9.4121E-01 & ---   & 1.3644E+00 & ---   & 3.4904E-03 & --- \\
    16    & 4.7017E-01 & 1.00  & 6.8351E-01 & 1.00  & 8.9894E-04 & 1.96  \\
    32    & 2.3481E-01 & 1.00  & 3.4193E-01 & 1.00  & 2.2798E-04 & 1.98  \\
    64    & 1.1732E-01 & 1.00  & 1.7099E-01 & 1.00  & 5.7391E-05 & 1.99  \\
    128   & 5.8633E-02 & 1.00  & 8.5497E-02 & 1.00  & 1.4433E-05 & 1.99  \\
    \hline
    \end{tabular}%
\end{table}%

\begin{table}[htbp]
  \centering
  \caption{Error values and convergence rates for (\ref{ex2}) on triangular meshes with $k=3$}\label{ex2_tri_P3}
    \begin{tabular}{c|c|c|c|c|c|c}
    \hline
    $n$     & $\trb{u-u_h}$ & Rate  & $\norm{u-u_h}_{2,h}$ & Rate  & $\norm{u-u_0}$ & Rate \\
    \hline
    2     & 1.0474E+00 & ---   & 1.6703E+00 & ---   & 8.5304E-03 & --- \\
    4     & 2.7187E-01 & 1.95  & 4.5396E-01 & 1.88  & 5.6923E-04 & 3.91  \\
    8     & 6.8241E-02 & 1.99  & 1.1647E-01 & 1.96  & 3.6278E-05 & 3.97  \\
    16    & 1.7027E-02 & 2.00  & 2.9360E-02 & 1.99  & 2.2723E-06 & 4.00  \\
    32    & 4.2486E-03 & 2.00  & 7.3603E-03 & 2.00  & 1.4099E-07 & 4.01  \\
    \hline
    \end{tabular}%
\end{table}%

\begin{table}[htbp]
  \centering
  \caption{Error values and convergence rates for (\ref{ex2}) on polygonal meshes with $k=2$}\label{ex2_poly_P2}
    \begin{tabular}{c|c|c|c|c|c|c}
    \hline
    $n$     & $\trb{u-u_h}$ & Rate  & $\norm{u-u_h}_{2,h}$ & Rate  & $\norm{u-u_0}$ & Rate \\
    \hline
    8     & 2.9734E+00 & ---   & 1.8766E+00 & ---   & 1.2437E-02 & --- \\
    16    & 1.5486E+00 & 0.94  & 9.9298E-01 & 0.92  & 3.7323E-03 & 1.74  \\
    32    & 7.8404E-01 & 0.98  & 5.0677E-01 & 0.97  & 9.8116E-04 & 1.93  \\
    64    & 3.9390E-01 & 0.99  & 2.5551E-01 & 0.99  & 2.4850E-04 & 1.98  \\
    128   & 1.9736E-01 & 1.00  & 1.2822E-01 & 0.99  & 6.2082E-05 & 2.00  \\
    \hline
    \end{tabular}%
\end{table}%

\begin{table}[htbp]
  \centering
  \caption{Error values and convergence rates for (\ref{ex2}) on polygonal meshes with $k=3$}\label{ex2_poly_P3}
    \begin{tabular}{c|c|c|c|c|c|c}
    \hline
    $n$     & $\trb{u-u_h}$ & Rate  & $\norm{u-u_h}_{2,h}$ & Rate  & $\norm{u-u_0}$ & Rate \\
    \hline
    4     & 5.2002E-01 & ---   & 5.9866E-01 & ---   & 1.1222E-03 & --- \\
    8     & 1.3220E-01 & 1.98  & 1.6706E-01 & 1.84  & 8.8280E-05 & 3.67  \\
    16    & 3.3449E-02 & 1.98  & 4.3703E-02 & 1.93  & 6.1111E-06 & 3.85  \\
    32    & 8.4305E-03 & 1.99  & 1.1141E-02 & 1.97  & 3.9362E-07 & 3.96  \\
    64    & 2.1173E-03 & 1.99  & 2.8113E-03 & 1.99  & 1.7194E-08 & 4.52  \\
    \hline
    \end{tabular}%
\end{table}%


\newpage

\section{Concluding remarks and ongoing work}

In the paper, we use weak functions $(P_k(T),P_{k-1}(e),P_{k-1}(e))$ to build the SFWG numerical scheme for the biharmonic equation. To achieve the optimal convergence orders of the errors, we modify the definition of the weak laplacian in \cite{BiharmonicSFWG}. Finally, the convergence rates in the $H^2$ and $L^2$ norms are of order $O(h^{k-1})$ and $O(h^{k+k_0-2})$, $(k_0=\min{\{k,3\}})$ which are verified by numerical examples.

In the future work, we will continue to study the WG related methods for fourth order equations.

\bibliographystyle{siam}
\bibliography{library}

\begin{thebibliography}{10}

\bibitem{BiharmonicWG}
{\sc L.~{}~Mu, J.~Wang, and X.~Ye}, {\em Weak {Galerkin} finite element methods
  for the biharmonic equation on polytopal meshes}, Numerical Methods for
  Partial Differential Equations, 30 (2014), pp.~1003--1029.

\bibitem{CRMFEM}
{\sc P.~G. Ciarlet and P.-A. Raviart}, {\em A mixed finite element method for
  the biharmonic equation}, in Mathematical aspects of finite elements in
  partial differential equations ({P}roc. {S}ympos., {M}ath. {R}es. {C}enter,
  {U}niv. {W}isconsin, {M}adison, {W}is., 1974), Publication No. 33, Math. Res.
  Center, Univ. of Wisconsin-Madison, Academic Press, New York, 1974,
  pp.~125--145.

\bibitem{BiharmonicMWG}
{\sc M.~Cui, X.~Ye, and S.~Zhang}, {\em A modified weak galerkin finite element
  method for the biharmonic equation on polytopal meshes}, 3, pp.~91--105.

\bibitem{MR2817542}
{\sc J.~Hu, Y.~Huang, and S.~Zhang}, {\em The lowest order differentiable
  finite element on rectangular grids}, SIAM J. Numer. Anal., 49 (2011),
  pp.~1350--1368.

\bibitem{HJMFEM}
{\sc C.~Johnson}, {\em On the convergence of a mixed finite-element method for
  plate bending problems}, Numerische Mathematik,  (1973), pp.~43--62.

\bibitem{MR0386298}
{\sc T.~Miyoshi}, {\em A finite element method for the solutions of fourth
  order partial differential equations}, Kumamoto J. Sci. (Math.), 9 (1972/73),
  pp.~87--116.

\bibitem{BiharmonicMixFEM3}
{\sc P.~Monk}, {\em A mixed finite element method for the biharmonic equation},
  SIAM journal on numerical analysis, 24 (1987), pp.~737--749.

\bibitem{BiharmonicDGFEM}
{\sc I.~Mozolevski, E.~Süli, and P.~R. Bösing}, {\em hp-version a priori
  error analysis of interior penalty discontinuous galerkin finite element
  approximations to the biharmonic equation}, Journal of scientific computing,
  30 (2007), pp.~465--491.

\bibitem{BiharmonicWGMFEM}
{\sc L.~Mu, J.~Wang, Y.~Wang, and X.~Ye}, {\em A {Weak} {Galerkin} {Mixed}
  {Finite} {Element} {Method} for {Biharmonic} {Equations}}, Dec. 2012.
\newblock arXiv:1210.3818 [math].

\bibitem{BrinkmanWG}
{\sc L.~Mu, J.~Wang, and X.~Ye}, {\em A stable numerical algorithm for the
  brinkman equations by weak galerkin finite element methods}, 273,
  pp.~327--342.

\bibitem{mu_weak_2012}
\leavevmode\vrule height 2pt depth -1.6pt width 23pt, {\em Weak {Galerkin}
  {Finite} {Element} {Methods} on {Polytopal} {Meshes}}, Aug. 2012.
\newblock arXiv:1204.3655 [math].

\bibitem{PossionWGReOrder}
\leavevmode\vrule height 2pt depth -1.6pt width 23pt, {\em A weak {Galerkin}
  finite element method with polynomial reduction}, Journal of Computational
  and Applied Mathematics, 285 (2015), pp.~45--58.

\bibitem{MR1191139}
{\sc P.~Oswald}, {\em Hierarchical conforming finite element methods for the
  biharmonic equation}, SIAM J. Numer. Anal., 29 (1992), pp.~1610--1625.

\bibitem{StokesWG}
{\sc J.~Wang and X.~Ye}, {\em A weak galerkin finite element method for the
  stokes equations}, 42, pp.~155--174.
\newblock Publisher: Springer {US}.

\bibitem{PossionMixedWG}
{\sc J.~{}Wang and X.~Ye}, {\em A weak galerkin mixed finite element method for
  second order elliptic problems}, 83, pp.~2101--2126.

\bibitem{PossionWG}
{\sc J.~Wang and X.~Ye}, {\em A weak galerkin finite element method for
  second-order elliptic problems}, Journal of computational and applied
  mathematics, 241 (2013), pp.~103--115.

\bibitem{PossionMWG}
{\sc X.~Wang, N.~S. Malluwawadu, F.~Gao, and T.~C. {McMillan}}, {\em A modified
  weak galerkin finite element method}, 271, pp.~319--327.
\newblock Publisher: Elsevier B.V.

\bibitem{PossionSFWG}
{\sc X.~Ye and S.~Zhang}, {\em A stabilizer-free weak galerkin finite element
  method on polytopal meshes}, 371.
\newblock Publisher: Elsevier B.V.

\bibitem{BiharmonicSFWG}
{\sc X.~{}Ye and S.~Zhang}, {\em A stabilizer free weak galerkin method for the
  biharmonic equation on polytopal meshes}, 58, pp.~2572--2588.

\bibitem{PossionSFWGReOrder}
{\sc X.~Ye and S.~Zhang}, {\em A new weak gradient for the stabilizer free weak
  {Galerkin} method with polynomial reduction}, DCDS-B, 26 (2021), p.~4131.

\bibitem{BiharmonicWGReOrder}
{\sc R.~Zhang and Q.~Zhai}, {\em A weak galerkin finite element scheme for the
  biharmonic equations by using polynomials of reduced order}, 64,
  pp.~559--585.

\bibitem{BiharmonicCFEM2}
{\sc S.~Zhang}, {\em A c1-p2 finite element without nodal basis}, ESAIM -
  Mathematical Modelling and Numerical Analysis,  (2008), pp.~175--192.

\end{thebibliography}

\end{document}